\DeclareMathOperator*{\toup}{\longrightarrow} 
\DeclareMathOperator{\rank}{\mathrm{rank}}
\newcommand{\s}{\ \ \ }
\newcommand{\Line}{\mathds{L}}
\newcommand{\K}{\mathds{K}}
\newcommand{\N}{\mathds{N}}
\newcommand{\R}{\mathds{R}}
\newcommand{\T}{\mathds{T}}
\newcommand{\ev}{\mathrm{ev}}
\newcommand{\id}{\mathrm{id}}
\newcommand{\ind}{\mathrm{ind}}
\newcommand{\avind}{\overline{\ind}}
\newcommand{\nul}{\mathrm{nul}}
\newcommand{\diff}{\mathrm{d}}
\newcommand{\fix}{\mathrm{fix}}
\newcommand{\Hom}{\mathrm{H}}
\newcommand{\Loc}{\mathrm{C}}
\newcommand{\Tan}{\mathrm{T}}
\newcommand{\iso}{I}
\newcommand{\orb}{\mathrm{orb}}
\newcommand{\pathMI}{\path{M}{\iso}}
\newcommand{\mpath}[3]{\Lambda^{#3}(#1;#2)}
\newcommand{\mpathMI}[1]{\mpath{M}{\iso}{#1}}
\newcommand{\W}{W^{1,2}}
\newcommand{\Wloc}{\W_{\mathrm{loc}}}
\begin{document}

\title[On the multiplicity of isometry-invariant geodesics]{On the multiplicity of isometry-invariant  geodesics\\ on product manifolds}

\author{Marco Mazzucchelli}

\address{UMPA, \'Ecole Normale Sup\'erieure de Lyon, 69364 Lyon Cedex 07, France}%
\email{marco.mazzucchelli@ens-lyon.fr}%

\subjclass[2000]{58E10, 53C22}
\keywords{isometry-invariant geodesics, closed geodesics, Morse theory}

\date{May 14, 2012, \emph{Revised:} July 28, 2013.}

\begin{abstract}
We prove that on any closed Riemannian manifold $(M_1\times M_2,g)$, with $\rank\Hom_1(M_1)\neq0$ and $\dim(M_2)\geq2$, every isometry homotopic to the identity admits infinitely many isometry-invariant geodesics. 
\end{abstract}

\maketitle

\begin{quote}
\begin{footnotesize}
\tableofcontents
\end{footnotesize}
\end{quote}

\section{Introduction}

On a closed Riemannian manifold $(M,g)$ equipped with an isometry $\iso$, a natural problem consists in searching for   1-dimensional submanifolds $\ell\looparrowright M$ that are complete geodesics invariant by $\iso$. More precisely, $\ell$ is an \textbf{$\iso$-invariant geodesic} when it can be parametrized with a geodesic $\gamma:\R\to \ell$ of constant positive speed  and there exists $\tau>0$ such that $\iso(\gamma(t))=\gamma(t+\tau)$ for all $t\in\R$. A special instance of this definition is when $\iso$ is the identity, in which case invariant geodesics are precisely closed geodesics. The study of isometry-invariant geodesics was initiated by Grove \cite{Grove:Condition_C_for_the_energy_integral_on_certain_path_spaces_and_applications_to_the_theory_of_geodesics, Grove:Isometry_invariant_geodesics}, who established several existence and  multiplicity results. Further investigations are due to, among others, Grove and Tanaka \cite{Grove_Tanaka:On_the_number_of_invariant_closed_geodesics_BULLETTIN, Grove_Tanaka:On_the_number_of_invariant_closed_geodesics_ACTA}, Tanaka \cite{Tanaka:On_the_existence_of_infinitely_many_isometry_invariant_geodesics}, Hingston \cite{Hingston:Isometry_invariant_geodesics_on_spheres} and Rademacher \cite{Rademacher:Metrics_with_only_finitely_many_isometry_invariant_geodesics}.

While any closed Riemannian manifold possesses closed geodesics, it is not always the case that it possesses isometry-invariant geodesics.  The easiest example is probably the flat torus $\T^2=[0,1]^2/\{0,1\}^2$, on which the rotation $\iso(x,y)=(1-y,x)$ is an isometry without invariant geodesics. Moreover, even if we require an isometry to be isotopic to the identity (e.g.\ the time-1 map of a Killing vector field), it may still have only finitely many invariant geodesics. For instance, on the standard Riemannian sphere $S^2$, any rotation of an angle $\theta$ around  the axis joining north and south poles has only the equator as invariant geodesic unless $\theta$ is a multiple of $\pi$. This is in contrast with the case of closed geodesics: celebrated results due to Bangert \cite{Bangert:On_the_existence_of_closed_geodesics_on_two_spheres}, Franks \cite{Franks:Geodesics_on_S2_and_periodic_points_of_annulus_homeomorphisms}, and Hingston \cite{Hingston:On_the_growth_of_the_number_of_closed_geodesics_on_the_two_sphere} imply that any Riemannian $S^2$ has infinitely many closed geodesics. It is a long standing conjecture in Riemannian geometry that  any closed Riemannian manifold (of dimension at least 2) has infinitely many closed geodesics. In \cite{Tanaka:On_the_existence_of_infinitely_many_isometry_invariant_geodesics}, Tanaka extended a celebrated result by Gromoll and Meyer \cite{Gromoll_Meyer:Periodic_geodesics_on_compact_Riemannian_manifolds} to the setting of isometry-invariant geodesics, asserting that any isometry $\iso$ of a closed simply connected Riemannian manifold $(M,g)$ possesses infinitely many closed geodesics provided the homology of a suitable space of $\iso$-invariant curves is sufficiently rich. By a result of Vigu\'e-Poirrier and Sullivan \cite{ViguePoirrier_Sullivan:The_homology_theory_of_the_closed_geodesic_problem},  this latter assumption is verified if $\iso$ is homotopic to the identity and the cohomology of $M$ is not a truncated polynomial ring in one variable.

In this paper we shall prove the following result that complements the one of Tanaka, and extends another important closed geodesics result due to Bangert and Klingenberg \cite[Corollary~3]{Bangert_Klingenberg:Homology_generated_by_iterated_closed_geodesics}. 

\begin{thm}\label{t:main}
Let $(M,g)$ be a Riemannian manifold such that $M$ is homeomorphic to a product $M_1\times M_2$ of closed manifolds with $\rank \Hom_1(M_1)\neq0$ and $\dim(M_2)\geq2$. Then every isometry $\iso$ of $(M,g)$ that is homotopic to the identity admits infinitely many $\iso$-invariant geodesics.
\end{thm}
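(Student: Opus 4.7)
The plan is to argue by contradiction: assume $\iso$ admits only finitely many $\iso$-invariant geodesics, and derive a contradiction by exhibiting homology in a suitable space of $\iso$-invariant curves that cannot be generated by the iterates of those geodesics. The argument combines Grove's variational framework for the isometry-invariant problem with the Bangert-Klingenberg strategy for closed geodesics on manifolds with rich fundamental group.

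I would first set up the variational problem on the $\W$ space $\pathMI$ of $\iso$-invariant paths and its $k$-fold iterated versions $\mpathMI{k}$, where the (normalized) energy functional has the $\iso$-invariant geodesics as its critical points (up to reparametrization) and satisfies Grove's Condition~C. Since $\iso$ is homotopic to the identity, an explicit isotopy produces, for every $k$, a homotopy equivalence $\mpathMI{k}\simeq\Lambda M$ with the free loop space that is compatible with iteration: each $\iso$-invariant geodesic of minimal period $\tau$ contributes, in every $\mpathMI{k}$, a critical point whose Morse index and nullity are controlled by Bott-type iteration formulas. The hypotheses on $M$ now provide enough homology in $\mpathMI{k}$, uniformly in $k$, to overwhelm these critical points. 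Concretely, $\rank\Hom_1(M_1)\neq 0$ yields a class $\alpha$ of infinite order in $\Hom_1(M_1;\Z)$ whose powers $\alpha^k$ label distinct free homotopy classes of loops in $M$; crossing cycles representing $\alpha^k$ in $\Lambda M_1$ with a positive-degree cycle in $M_2$---which exists and survives at the loop-space level thanks to $\dim M_2\geq 2$, avoiding $S^1$-equivariance degeneracies---produces a family of homology classes $\beta_k\in\Hom_*(\Lambda M)$ with minimal energy levels growing at most linearly in $k$, and hence, by pullback, candidate classes in each $\mpathMI{k}$.

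The contradiction would then be a standard ``local homology at iterated critical points'' estimate: using the Gromoll-Meyer iteration formula together with Bangert-Klingenberg's refined bound on the local homology carried by iterates, the homological contribution of the iterates of finitely many primitive $\iso$-invariant geodesics grows too slowly, in the relevant degrees and energy windows, to realize the full family $\{\beta_k\}$. The main obstacle I anticipate is the careful alignment of the iteration structure for $\iso$-invariant geodesics---which is driven by $\iso$ rather than by the $S^1$-rotation acting on closed loops---with the Bangert-Klingenberg machinery; in particular, one must verify that the homotopy equivalence $\mpathMI{k}\simeq\Lambda M$ preserves the energy filtration closely enough that the classes $\beta_k$ reappear at the expected sublevels of $\mpathMI{k}$, while coping with the absence of a full $S^1$-symmetry on the $\iso$-invariant path space.
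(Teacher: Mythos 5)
Your setup matches the paper's: argue by contradiction, use the homotopy equivalence $\path{M}{\id}\simeq\pathMI$ induced by the isotopy from $\id$ to $\iso$, use $\rank\Hom_1(M_1)\neq0$ to produce infinitely many distinct components of the loop space labelled by the powers of a $1$-cycle, and cross with a degree-$\dim(M_2)$ class coming from $M_2$ (in the paper, via the evaluation map $\ev$, which is a left inverse to the inclusion of $M_2$ into each component). Up to that point you are on the right track.

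The genuine gap is in the mechanism you propose for the contradiction. You invoke a counting estimate: the local homology contributed by iterates of finitely many primitive geodesics ``grows too slowly'' to realize the classes $\beta_k$. But here the available homology does not grow at all in rank: each component $D_m$ of $\pathMI$ contributes exactly one class in the fixed degree $d=\dim(M_2)$, while a \emph{single} closed $\iso$-invariant geodesic $\gamma$ of period $p$ already produces infinitely many distinct critical orbits $\orb(\gamma^{mp+1})$, $m\in\N$, each of which may carry nontrivial local homology in degree $d$. So there is no growth-rate discrepancy to exploit; this is precisely the regime (bounded Betti numbers) where the Gromoll--Meyer counting argument fails and where Bangert--Klingenberg's actual contribution is needed. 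That contribution is not a bound on local homology but a \emph{homological vanishing} statement: using Bangert's explicit path-stretching homotopy, any class supported on sufficiently high iterates of a geodesic with $\avind(\gamma)=0$ can be pushed below the critical level, so the map $\Loc_d(E,\orb(\gamma^{mp+1}))\to\Hom_d(\pathMI,\{E<E(\gamma^{mp+1})\})$ fails to be injective for arbitrarily large $m$ (Lemma~\ref{l:bangert_local_homology} and Proposition~\ref{p:bangert_klingenberg}). The contradiction then comes from the opposite direction: the finiteness assumption, combined with Morse inequalities and the vanishing of relative homology in degree $d+1$ above a fixed energy, forces these maps to be injective. Making the vanishing argument work in the $\iso$-invariant setting also requires the Grove--Tanaka stabilization of local homology along iterates (Lemmas~\ref{l:irrational} and~\ref{l:rational}, with the rational/irrational period dichotomy), so that all the relevant cycles live on a single finite ``period'' submanifold $\Omega$ to which Bangert's construction applies; your proposal gestures at the Bott/Gromoll--Meyer iteration theory but does not supply this step, and without it the argument does not close.
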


The starting point in the study of this kind of results is the  crucial observation due to Grove \cite[Theorem~2.4]{Grove:Isometry_invariant_geodesics} that any isometry with only finitely many invariant geodesics does not have non-closed ones. In view of this fact, the main issue in the proof of multiplicity results consists in identifying several iterations of a same closed isometry-invariant geodesic detected as distinct critical points of an energy function. The classical tools for dealing with this problem, the iteration theory for Morse indices \cite{Bott:On_the_iteration_of_closed_geodesics_and_the_Sturm_intersection_theory, Long:Index_theory_for_symplectic_paths_with_applications} and local homology groups \cite{Gromoll_Meyer:Periodic_geodesics_on_compact_Riemannian_manifolds} of closed geodesics, have been ingeniously extended to the setting of isometry-invariant geodesics by Grove and Tanaka  \cite{Grove_Tanaka:On_the_number_of_invariant_closed_geodesics_BULLETTIN, Grove_Tanaka:On_the_number_of_invariant_closed_geodesics_ACTA, Tanaka:On_the_existence_of_infinitely_many_isometry_invariant_geodesics}. In order to prove our main theorem, we will combine this machinery together with an extension to the isometry-invariant setting of a homological technique of Bangert and Klingenberg \cite{Bangert_Klingenberg:Homology_generated_by_iterated_closed_geodesics}.

Further generalizations of closed geodesics results may be possible. Specifically, remarkable results due to Bangert and Hingston \cite{Bangert_Hingston:Closed_geodesics_on_manifolds_with_infinite_Abelian_fundamental_group} show that every closed Riemannian manifold with infinite abelian fundamental group must have infinitely many closed geodesics. To the best of the author's knowledge, a  generalization of such result to the isometry-invariant case has not been investigated yet. Indeed, even when the fundamental group has rank larger than 1, it is not clear how to conclude that there are infinitely many geodesics invariant by a general isometry homotopic to the identity. We plan to study this problem further  in the future.

\subsection{Organization of the paper} In Section~\ref{s:Preliminaries} we review the variational principle, Morse indices and local homology of isometry-invariant geodesics. In Section~\ref{s:Bangert_lemma}, after recalling Grove and Tanaka's results on the local homology of iterated geodesics, we prove an isometry-invariant version of the homological vanishing result of  Bangert and Klingenberg, and we derive an application to the multiplicity of isometry-invariant geodesics. Finally, in Section~\ref{s:proof_main} we prove Theorem~\ref{t:main}.

\subsection{Acknowledgments} I wish to thank the anonymous referee for his or her careful reading of the manuscript, and for providing useful comments.

\section{Preliminaries}\label{s:Preliminaries}

\subsection{The variational setting}
Throughout this paper,  $(M,g)$ will be a closed Riemannian manifold equipped with an isometry $\iso$. Isometry-invariant geodesics can be detected by the following well-known variational principle. Consider the  path space $\pathMI$ of all $\Wloc$ curves $\zeta:\R\to M$ such that $\iso(\zeta(t))=\zeta(t+1)$ for all $t\in\R$. We recall that a curve $\zeta$ has $\Wloc$-regularity when it is absolutely continuous, weakly differentiable, and the function $t\mapsto g(\dot\zeta(t),\dot\zeta(t))$ is locally integrable. The space $\pathMI$ is a Hilbert manifold, and the tangent space $\Tan_\zeta\pathMI$ is given by all the $\Wloc$ vector fields $X$ along $\zeta$ such that $\iso_*(X(t))=X(t+1)$ for all $t\in\R$. We can   equip $\pathMI$ with a complete Riemannian metric $G$ given by
\begin{align*}
G(X,Y)=\int_0^1 \Big[ g(X(t),Y(t)) + g(\dot X(t),\dot Y(t)) \Big]\diff t,\s\s\forall X,Y\in\Tan_\zeta\pathMI,
\end{align*}
where the dots in this expression denote the covariant derivative along $\zeta$. The energy function $E:\pathMI\to\R$ is given by
\begin{align*}
E(\zeta)=\int_0^1 g(\dot\zeta(t),\dot\zeta(t))\,\diff t.
\end{align*}
This function is $C^\infty$, and satisfies the Palais-Smale condition: any sequence $\{\zeta_n\}\subset\pathMI$ such that $E(\zeta_n)$ is uniformly bounded and $G(\nabla E(\zeta_n),\nabla E(\zeta_n))\to0$ admits a subsequence converging toward a critical point of $E$. The critical points of $E$ are precisely the $g$-geodesics $\gamma:\R\to M$ parametrized with constant speed $g(\dot\gamma,\dot\gamma)$ and such that $\iso(\gamma(t))=\gamma(t+1)$. We refer to reader to Grove \cite{Grove:Condition_C_for_the_energy_integral_on_certain_path_spaces_and_applications_to_the_theory_of_geodesics} for more details about the variational principle associated to isometry-invariant geodesics.

Strictly  speaking, any fixed point of the isometry $\iso$ would be a stationary $\iso$-invariant geodesic. However, in this paper, we will only consider geodesics with positive energy. If $\gamma$ is a critical point of $E$, for each $t_0\in\R$ the translated curve $t\mapsto\gamma(t+t_0)$ is also a critical point of $E$ corresponding to the same geometric curve. If $E(\gamma)>0$, the family $\orb(\gamma)$ of translated curves associated to $\gamma$ forms a critical orbit of $E$. Notice that $\orb(\gamma)\cong S^1$ if $\gamma$ is a periodic curve, otherwise $\orb(\gamma)\cong \R$.

\subsection{Indices of the critical orbits of the energy} Counting the critical orbits of $E$ with positive critical value by means of topological methods does not provide a count for the number of $\iso$-invariant geodesics: a closed $\iso$-invariant geodesic $\ell\cong S^1$ gives rise to infinitely many critical orbits of $E$. Indeed, let $\gamma:\R\to\ell$ be a parametrization of $\ell$ with constant speed, basic period $p\geq1$, and such that $\iso(\gamma(0))=\gamma(1)$. For every real number $k\neq0$ we define $\gamma^k:\R\to M$ to be the curve $\gamma^k(t)=\gamma(kt)$. All the parametrized curves $\gamma^{mp+1}$, where $m\in\N$, belong to $\pathMI$ and are critical points of $E$ corresponding to the same oriented geodesic $\ell$: the curve $\gamma^{mp+1}|_{[0,1]}$ joins $\gamma(0)$ with $\gamma(1)$ after winding around $\ell$ for $m$ times. An essential observation due to Grove \cite[Theorem~2.4]{Grove:Isometry_invariant_geodesics} claims that if $\iso$ has a non-closed invariant geodesic, then it admits uncountably many invariant geodesics. This implies that, in order to study the multiplicity of $\iso$-invariant geodesics, we can assume that all of these geodesics are closed.

A way to identify critical orbits of $E$ corresponding to the same geometric curve is to look at their Morse index, a strategy that was first introduced by Bott \cite{Bott:On_the_iteration_of_closed_geodesics_and_the_Sturm_intersection_theory} in the study of closed geodesics. If $\gamma$ is a critical point of $E$, its \textbf{Morse index} $\ind(\gamma)$ and \textbf{nullity} $\nul(\gamma)$ are defined respectively as the dimension of the negative eigenspace and the dimension reduced by 1 of the kernel of the Hessian of $E$ at $\gamma$. All the curves in the critical orbit of $\gamma$ share the same index and nullity. In \cite{Grove_Tanaka:On_the_number_of_invariant_closed_geodesics_ACTA,Tanaka:On_the_existence_of_infinitely_many_isometry_invariant_geodesics}, Grove and Tanaka developed an  analogue of Bott's theory in the much harder setting of isometry invariant geodesics. For our applications, we will need the following statement.

\begin{prop}[Grove-Tanaka \cite{Grove_Tanaka:On_the_number_of_invariant_closed_geodesics_ACTA,Tanaka:On_the_existence_of_infinitely_many_isometry_invariant_geodesics}]\label{p:average_index}
Let $\gamma$ be a critical point of $E$ that is a periodic curve of basic period $p>0$. Then $m^{-1}\ind(\gamma^{mp+1})$ converges to a non-negative number $\avind(\gamma)$ as $m\to\infty$. Moreover, if $\avind(\gamma)=0$ then $\ind(\gamma^{mp+1})=0$ for all $m\in\N$. \hfill\qed
\end{prop}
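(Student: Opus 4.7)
The plan is to transpose Bott's iteration theory for closed geodesics to the twisted variational setting of $\pathMI$, exactly as done by Grove and Tanaka in the cited works. First I would write the Hessian of $E$ at the iterate $\gamma^{mp+1}$ as the standard index form
$$(X,Y)\longmapsto\int_0^1 \bigl[g(\nabla_t X,\nabla_t Y)-g(R(X,\dot\gamma^{mp+1})\dot\gamma^{mp+1},Y)\bigr]\diff t$$
on the space of $\Wloc$ vector fields $X$ along $\gamma^{mp+1}$ subject to the twist $\iso_*(X(t))=X(t+1)$. After rescaling $s=(mp+1)t$ and using $\gamma(s+p)=\gamma(s)$ together with $\iso\circ\gamma(s)=\gamma(s+1)$, this becomes the Jacobi index problem on $[0,mp+1]$ along $\gamma$ with boundary condition $X(mp+1)=\iso_* X(0)$, so $\ind(\gamma^{mp+1})$ is its number of negative eigenvalues.

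Second, I would pass to the symplectic picture. The Jacobi equation along $\gamma$ is a $p$-periodic linear Hamiltonian system whose time-$p$ flow $P$ is a symplectic automorphism of the normal symplectic space $N$ at $\gamma(0)$. Because $\iso$ is an isometry with $\iso\circ\gamma=\gamma(\cdot+1)$, the derivative $A=\diff\iso|_{\gamma(0)}$ preserves $N$, is symplectic, and commutes with $P$. A standard Maslov-type computation then identifies $\ind(\gamma^{mp+1})$ with the Bott-Long index of the symplectic path obtained by iterating the Jacobi flow $m$ full periods and closing up with the twist $A$. Grove-Tanaka's version of the Bott splitting formula now produces a non-negative, piecewise constant ``splitting function'' $\Lambda\colon S^1\to\N$ (the Krein-splitting numbers of the twisted monodromy $AP$) together with the identity
$$\ind(\gamma^{mp+1})=\sum_{\omega\in\sigma_m}\Lambda(\omega),$$
where $\sigma_m\subset S^1$ is a finite set of cardinality proportional to $m$ that equidistributes on $S^1$ as $m\to\infty$.

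Both conclusions then follow formally. Dividing by $m$ and invoking the equidistribution of $\sigma_m$, the right-hand side converges to $\int_{S^1}\Lambda\,\diff\mu\geq 0$, which is the claimed limit $\avind(\gamma)$. If this integral vanishes then $\Lambda\equiv 0$ on $S^1$ by non-negativity, and the Bott-type identity immediately gives $\ind(\gamma^{mp+1})=0$ for every $m\in\N$. The main obstacle I anticipate is the second step: correctly setting up the twisted symplectic framework and proving the Bott-type identity in the presence of the isometry boundary operator $A$, especially when $A$ has non-trivial spectrum on $N$ coupling with the Floquet multipliers of $P$. This is precisely what Grove and Tanaka carried out in \cite{Grove_Tanaka:On_the_number_of_invariant_closed_geodesics_ACTA, Tanaka:On_the_existence_of_infinitely_many_isometry_invariant_geodesics}, so I would import their machinery rather than redevelop it from scratch.
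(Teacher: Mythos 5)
The paper does not actually prove this proposition: it is stated as a quoted result of Grove--Tanaka and Tanaka, with the proof entirely deferred to the cited references, so your decision to import their machinery rather than redevelop it matches what the paper itself does. Two points in your sketch are nevertheless inaccurate as written. First, $A=\diff\iso|_{\gamma(0)}$ maps $T_{\gamma(0)}M$ to $T_{\gamma(1)}M$, and since the basic period $p$ need not divide $1$ one has $\gamma(1)\neq\gamma(0)$ in general; hence $A$ does not preserve the normal space at $\gamma(0)$ and does not commute with the time-$p$ Jacobi flow in any naive sense. One must compose the twist with the linearized flow along $\gamma|_{[0,1]}$, and the arithmetic interplay between this twist and the period is precisely why Grove and Tanaka treat the rational and irrational period cases by genuinely different arguments: there is no single Bott-type splitting formula with an equidistributing set $\sigma_m$ covering both cases, and in the irrational case Tanaka replaces the Bott picture by the auxiliary spaces $\Lambda^{m,\mu}$ that reappear in Section~\ref{s:iterated_local_homology} of this paper. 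Second, in the last step, non-negativity of $\Lambda$ alone does not force $\Lambda\equiv 0$ from $\int_{S^1}\Lambda\,\diff\mu=0$, since a non-negative function can vanish almost everywhere while being positive at the finitely many exceptional points (the eigenvalues of the twisted monodromy on the unit circle); you also need the lower semicontinuity of the splitting function, $\Lambda(\omega_0)\leq\liminf_{\omega\to\omega_0}\Lambda(\omega)$, which is what forces the index to vanish at those points and yields $\ind(\gamma^{mp+1})=0$ for all $m\in\N$.
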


Another important index of homological nature is the \textbf{local homology} of $E$ at $\orb(\gamma)$, defined as the relative homology group
\begin{align*}
\Loc_*(E,\orb(\gamma))
:=
\Hom_*(\{E<c\}\cup\orb(\gamma),\{E<c\}),
\end{align*}
where $c=E(\gamma)$, and $\Hom_*$ denotes the singular homology functor with rational coefficients. The interplay between the local homology and the Morse indices of a critical orbit can be summarized by saying that the graded group $\Loc_*(E,\orb(\gamma))$ is always trivial in degree less than $\ind(\gamma)$ or greater than $\ind(\gamma)+\nul(\gamma)+1$. Local homology groups are the ``building blocks'' for the homology of the path space $\pathMI$. More precisely, for all $b>c=E(\gamma)$ such that the interval $(c,b)$ does not contain critical values of $E$, the inclusion induces an injective homomorphism
\begin{align*}
\Loc_*(E,\orb(\gamma))
\hookrightarrow
\Hom_*(\{E<b\},\{E<c\}).
\end{align*}
Here, we also allow $b$ to be equal to $+\infty$, in which case $\{E<b\}=\pathMI$. For a general interval $[a,b]\subset(0,+\infty]$ and any homological degree $d$, we have the \textbf{Morse inequality}
\begin{align*}
\rank \Hom_d(\{E<b\},\{E<a\}) \leq \sum_{\orb(\gamma)} \rank \Loc_d(E,\orb(\gamma)),
\end{align*}
where the sum on the right-hand side runs over all the critical orbits $\orb(\gamma)$ such that $a\leq E(\gamma)<b$.

\section{Bangert-Klingenberg Lemmas for isometry-invariant geodesics}\label{s:Bangert_lemma}

In their seminal paper \cite{Bangert_Klingenberg:Homology_generated_by_iterated_closed_geodesics}, Bangert and Klingenberg showed that any sufficiently iterated closed geodesic with average Morse index 0, and that is not a global minimum of the energy in his free homotopy class, cannot arise as a minimax point generated by a (relative) homology class of the free loop space. The important consequence of this result is that, whenever there is a closed geodesic  with these properties that is homologically visible, the Riemannian manifold must contain infinitely many closed geodesics. The proof of this result is based on a homotopic technique introduced earlier by Bangert \cite{Bangert:Closed_geodesics_on_complete_surfaces}, and further employed in different settings by Bangert-Hingston \cite{Bangert_Hingston:Closed_geodesics_on_manifolds_with_infinite_Abelian_fundamental_group}, Hingston \cite{Hingston:On_the_growth_of_the_number_of_closed_geodesics_on_the_two_sphere}, Bangert \cite{Bangert:On_the_existence_of_closed_geodesics_on_two_spheres}, Long \cite{Long:Multiple_periodic_points_of_the_Poincare_map_of_Lagrangian_systems_on_tori}, Lu \cite{Lu:The_Conley_conjecture_for_Hamiltonian_systems_on_the_cotangent_bundle_and_its_analogue_for_Lagrangian_systems}, and the author \cite{Mazzucchelli:The_Lagrangian_Conley_conjecture, Mazzucchelli:On_the_multiplicity_of_non_iterated_periodic_billiard_trajectories}. In this section  we apply results due to Grove and Tanaka \cite{Grove_Tanaka:On_the_number_of_invariant_closed_geodesics_BULLETTIN, Grove_Tanaka:On_the_number_of_invariant_closed_geodesics_ACTA, Tanaka:On_the_existence_of_infinitely_many_isometry_invariant_geodesics} in order to establish the analogue of Bangert and Klingenberg's result in the context of closed isometry-invariant geodesics.

\subsection{Local homology of iterated orbits}\label{s:iterated_local_homology}
In the setting of Section~\ref{s:Preliminaries}, let $\gamma$ be a critical point of $E$  that is a periodic curve of basic period $p\geq1$ and average index $\avind(\gamma)=0$. Since in this paper we are looking for infinitely many $\iso$-invariant geodesics, we can assume that each $\orb(\gamma^{mp+1})$, where $m\in\N$, is isolated in the set of critical points of $E$. In~\cite{Grove_Tanaka:On_the_number_of_invariant_closed_geodesics_ACTA, Tanaka:On_the_existence_of_infinitely_many_isometry_invariant_geodesics}, Grove and Tanaka showed that, up to isomorphism, there are only finitely many different groups  in the family $\{\Loc_*(E,\gamma^{pm+1})\ |\ m\in\N\}$, a statement established earlier by Gromoll and Meyer \cite{Gromoll_Meyer:Periodic_geodesics_on_compact_Riemannian_manifolds}  in case $\iso=\id$. For later purposes, we need to rephrase their results, and we refer to the reader to their papers for a detailed proof.

For syntactic convenience, for every $\tau>0$ let us consider the Hilbert manifold
\[
\mpathMI{\tau}=\left\{\gamma\in\Wloc(\R;M)\ \Big|\ \iso(\gamma(t))=\gamma(t+\tau)\ \ \forall t\in\R\right\},
\]
and the energy function $E^\tau:\mpathMI{\tau}\to\R$ defined by
\begin{align*}
E^\tau(\zeta)=\frac{1}{\tau} \int_0^\tau g(\dot\zeta(t),\dot\zeta(t))\,\diff t.
\end{align*}
There is an obvious diffeomorphism of Hilbert manifolds $\Psi^\tau:\mpathMI{\tau}\to\pathMI$ given by $\Psi^\tau(\zeta)=\zeta^\tau$, where $\zeta^\tau(t)=\zeta(\tau t)$. Moreover $E\circ\Psi^\tau=\tau^2 E^\tau$. With this notation, for each $m\in\N$, the curve $\gamma$ is the critical point of $E^{mp+1}$ corresponding to the critical point $\gamma^{mp+1}$ of $E$. If $c=E^{mp+1}(\gamma)$, we denote the local homology of $E^{mp+1}$ at $\orb(\gamma)$ by 
\[
\Loc_*(E^{mp+1},\orb(\gamma)):=
\Hom_*(\{E^{mp+1}<c\}\cup\orb(\gamma),\{E^{mp+1}<c\}).
\]
The diffeomorphism $\Psi^{mp+1}$ induces a homology isomorphism of this group with the local homology $\Loc_*(E,\orb(\gamma^{mp+1}))$.

We first consider the case in which the period $p$ of $\gamma$ is irrational, treated in~\cite[Section~3]{Tanaka:On_the_existence_of_infinitely_many_isometry_invariant_geodesics}. For all $\mu\in\N$, we define the Hilbert manifold
\begin{align*}
\Lambda^{m,\mu}
:= &\,
\mpathMI{mp+1}\cap\mpath{M}{\mathrm{id}}{\mu p}\\
= &\,
\big\{
\zeta\in\Wloc(\R;M)\ \big|\ \iso(\zeta(t-mp-1))=\zeta(t)=\zeta(t+\mu p)\ \ \forall t\in\R
\big\}.
\end{align*}
It readily follows from the definition that $\Lambda^{m,\mu}=\Lambda^{n,\mu}$ if $m\equiv n \mod\mu$. Moreover, for all $\zeta\in\Lambda^{m,\mu}\cap C^\infty(\R;M)$, the function $t\mapsto g(\dot\zeta(t),\dot\zeta(t))$ is constant, being both $\mu p$-periodic and $(mp+1)$-periodic with $(\mu p)^{-1}(mp+1)$ irrational. Since  smooth curves are dense in $\Lambda^{m,\mu}$, if we denote by\footnote{In some sense, the map $\iota^{m,\mu}$, as well as the map $j^{\mu,\tau,\alpha,\theta}$ introduced below before Lemma~\ref{l:rational}, plays the same role as the iteration map in the theory of closed geodesics.} $\iota^{m,\mu}:\Lambda^{m,\mu}\hookrightarrow\mpathMI{mp+1}$ the inclusion of the corresponding spaces, we have that $E^{mp+1}\circ\iota^{m,\mu}=E^{\mu p}|_{\Lambda^{m,\mu}}$. According to the following lemma, which is a variation of \cite[Lemma~3.2]{Tanaka:On_the_existence_of_infinitely_many_isometry_invariant_geodesics}, some of the maps  $\iota^{m,\mu}$ induce an isomorphism of the corresponding local homology groups of $\orb(\gamma)$.

\begin{lem}\label{l:irrational}
There exists a bounded function $\mu:\N\to\N$ such that, for all but finitely many $m\in\N$, the inclusion $\iota^{m,\mu(m)}$ induces a  homology isomorphism
\[
\iota^{m,\mu(m)}_*:\Loc_*(E^{\mu(m)p}|_{\Lambda^{m,\mu(m)}},\orb(\gamma))\toup^{\cong}\Loc_*(E^{mp+1},\orb(\gamma)).
\]
\end{lem}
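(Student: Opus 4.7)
The plan is to imitate the Gromoll--Meyer/Tanaka strategy of reducing the local homology of a highly iterated critical orbit to a finite-dimensional ``null'' problem, and then to choose $\mu(m)$ so that this finite-dimensional problem already sits inside $\Lambda^{m,\mu(m)}$.

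First I would apply the generalized Morse lemma of Gromoll--Meyer, as extended to the isometry-invariant setting by Grove--Tanaka, in a tubular neighborhood of $\orb(\gamma)$ inside $\mpathMI{mp+1}$. Because $\avind(\gamma)=0$, Proposition~\ref{p:average_index} gives $\ind(\gamma^{mp+1})=0$ for every $m\in\N$, so the Morse lemma splits a neighborhood of $\orb(\gamma)$ as a bundle over $\orb(\gamma)$ whose fiber decomposes into just two factors: a finite-dimensional ``characteristic'' (null) submanifold $N_m$ tangent at $\gamma$ to the kernel $\mathcal{K}_m$ of the Hessian $d^2E^{mp+1}(\gamma)$, and a Hilbert direction on which the Hessian is strictly positive definite. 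Consequently the local homology reduces to
\[
\Loc_*(E^{mp+1},\orb(\gamma))\;\cong\;\Loc_*\bigl(E^{mp+1}|_{N_m},\orb(\gamma)\bigr),
\]
and the same reduction will apply to $E^{\mu p}|_{\Lambda^{m,\mu}}$ once I check that $N_m$ can be chosen to lie inside $\Lambda^{m,\mu(m)}$.

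Second, I would analyze $\mathcal{K}_m$ concretely. A vector field $X$ lies in $\mathcal{K}_m$ iff it is a Jacobi field along $\gamma$ satisfying the twisted-periodicity $X(t+mp+1)=\iso_*X(t)$. Since $\gamma$ is $p$-periodic, the Jacobi fields along $\gamma$ form a finite-dimensional vector space $V$ on which the time-$p$ linearized flow $S:V\to V$ and the shift-and-$\iso_*$ operator $A:V\to V$ act as commuting linear automorphisms with eigenvalues on the unit circle (here one uses that $\iso$ is an isometry and $\gamma$ is a periodic geodesic, so that $S$ and $A$ are unitary on $V$ with respect to the natural symplectic/Hermitian structure coming from the Jacobi equation). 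The condition defining $\mathcal{K}_m$ becomes an eigenvalue equation of the form $S^m A=\id$ on $V$, and the condition of $\mu p$-periodicity is $S^\mu=\id$ on $V$.

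Third, I would use the irrationality of $p$, together with the fact that the combined spectrum of $S$ and $A$ is a finite subset of the torus, to extract a bounded collection of eigenvalues that can ever appear on $\mathcal{K}_m$: irrationality of $p$ prevents the ``boundary resonance'' cases, so after excluding finitely many exceptional $m$ the 1-eigenspace of $S^m A$ is contained in the 1-eigenspace of some $S^{\mu}$ with $\mu$ in a \emph{finite} set of positive integers determined only by the (finite) spectrum of $S|_{\ker(S^m A-\id)}$. This yields the bounded function $\mu:\N\to\N$ and ensures $\mathcal{K}_m\subset\ker(S^{\mu(m)}-\id)$, i.e.\ every element of $\mathcal{K}_m$ is $\mu(m)p$-periodic. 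Hence $N_m$ may be taken inside $\Lambda^{m,\mu(m)}$, and its tangent space at $\gamma$ equals $\mathcal{K}_m=\ker d^2\bigl(E^{\mu(m)p}|_{\Lambda^{m,\mu(m)}}\bigr)(\gamma)$.

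Finally, with this inclusion the Morse-lemma decomposition for $E^{\mu(m)p}|_{\Lambda^{m,\mu(m)}}$ is the restriction of the decomposition for $E^{mp+1}$: the null direction is the same $N_m$, the Hessian on the transverse direction in $\Lambda^{m,\mu(m)}$ is still positive definite (as a restriction of a positive definite form), and $E^{mp+1}\circ\iota^{m,\mu(m)}=E^{\mu(m)p}|_{\Lambda^{m,\mu(m)}}$ by the remark preceding the statement. A standard deformation-retraction argument on sublevel sets along the positive-definite directions then shows that $\iota^{m,\mu(m)}$ induces the required isomorphism on local homology. The genuinely delicate step is the third: producing a \emph{bounded} $\mu(m)$ uniformly in $m$, which is where the irrationality of $p$, the unitarity of $S$ and $A$, and the hypothesis $\avind(\gamma)=0$ (forcing only finitely many possible spectral configurations on $\mathcal{K}_m$) must be combined carefully, essentially reproducing the content of \cite[Lemma~3.2]{Tanaka:On_the_existence_of_infinitely_many_isometry_invariant_geodesics}.
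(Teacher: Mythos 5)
Your proposal is correct and follows essentially the same route as the paper: both arguments combine $\ind(E^{mp+1},\gamma)=0$ (from $\avind(\gamma)=0$ via Proposition~\ref{p:average_index}) with Tanaka's Lemma~3.2, which supplies the bounded $\mu(\cdot)$ and the equality of the null spaces of the Hessians of $E^{\mu(m)p}|_{\Lambda^{m,\mu(m)}}$ and $E^{mp+1}$ at $\gamma$, and then invoke a standard Morse-theoretic argument to deduce the isomorphism on local homology. The only differences are cosmetic: the paper justifies the final step by checking that $\nabla E^{mp+1}$ is tangent to $\Lambda^{m,\mu(m)}$ (so the submanifold is gradient-flow invariant) rather than by matching Gromoll--Meyer splittings, and your aside that $S$ is unitary with spectrum on the unit circle is inaccurate in general (think of hyperbolic orbits) but harmless, since that spectral step is exactly the content you, like the paper, delegate to Tanaka.
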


\begin{proof}
Throughout this proof, let us adopt the extensive notation $\ind(F,x)$ and $\nul(F,x)$ to denote Morse index and nullity of a function $F$ at a critical point $x$. We recall that our curve $\gamma$ is supposed to have average Morse index 0. By Proposition~\ref{p:average_index}, $\ind(E^{mp+1},\gamma)=0$ for all $m\in\N$. Since $\ind(E^{\mu p}|_{\Lambda^{m,\mu}},\gamma)\leq\ind(E^{mp+1},\gamma)$, we infer
\[
\ind(E^{\mu p}|_{\Lambda^{m,\mu}},\gamma)=\ind(E^{mp+1},\gamma)=0,\s\s\forall \mu,m\in\N.
\]

In the proof of \cite[Lemma~3.2]{Tanaka:On_the_existence_of_infinitely_many_isometry_invariant_geodesics}, Tanaka showed that there exists a bounded function $\mu:\N\to\N$ such that, for all $m\in\N$ large enough, the null spaces of the Hessians of $E^{\mu(m)p}|_{\Lambda^{m,\mu}}$ and $E^{mp+1}$ at $\gamma$ are the same. In particular, there exists $m_0\in\N$ such that 
\[
\nul(E^{\mu(m)p}|_{\Lambda^{m,\mu}},\gamma)=\nul(E^{mp+1},\gamma),\s\s\forall m\geq m_0.
\]
Now, let us equip the Hilbert manifold $\mpathMI{mp+1}$ with the Riemannian metric 
\begin{align}\label{e:Riemannian_metric_mp+1}
G^{mp+1}(X,Y)=\int_0^{mp+1} \Big[ g(X(t),Y(t)) + g(\dot X(t),\dot Y(t)) \Big]\diff t,\\\forall X,Y\in\Tan_\zeta\mpathMI{mp+1}.
\end{align}
We denote by $\nabla E^{mp+1}$ the gradient of the energy $E^{mp+1}$ with respect to this Riemannian metric. A standard computation shows that, if $\zeta\in \mpathMI{mp+1}$ is periodic with period $q$, then $\nabla E^{mp+1}(\zeta)$ is a $q$-periodic vector field along $\zeta$. In particular $\nabla E^{mp+1}(\zeta)$ belongs to the tangent space $\Tan_\zeta \Lambda^{m,\mu}$ for all $\zeta\in\Lambda^{m,\mu}$.

Summing up, for all $m\geq m_0$, the submanifold $\Lambda^{m,\mu(m)}$ of  $\mpathMI{mp+1}$ is invariant under the gradient flow of $\mpathMI{mp+1}$, and the Morse index and nullity of $E^{mp+1}$ at $\gamma$ do not change when we restrict the function to the submanifold  $\Lambda^{m,\mu(m)}$. By a standard argument in Morse theory (see e.g.~\cite[Theorem~5.1.1]{Mazzucchelli:Critical_point_theory_for_Lagrangian_systems}), our statement follows.
\end{proof}

Let us now consider the case in which the period $p$ is rational, for which the reference is \cite[Sections~2--3]{Grove_Tanaka:On_the_number_of_invariant_closed_geodesics_ACTA}. Let $a$ and $b$ be relatively prime positive integers such that $p=a/b$. Notice that $\iso^a(\gamma(t))=\gamma(t)$ for all $t\in\R$, namely the geometric curve $\gamma(\R)$ is contained in $\fix(\iso^a)$. We recall that the fixed points set of an isometry is a collection of closed totally-geodesic submanifolds of $(M,g)$. For a fixed value of $m$, consider three positive integers $\tau$, $\alpha$ and $\theta$ with the following properties:
\begin{itemize}
\item $\tau^{-1}(mp+1)$ is a positive integer,
\item $a$ divides $\alpha$, in particular $\iso^\alpha(\gamma(t))=\gamma(t)$ for all $t\in\R$,
\item $\iso^\theta(\gamma(t))=\gamma(t+\tau)$ for all $t\in\R$,
\item $\tau^{-1}\theta(mp+1)\equiv1\mod \alpha$.
\end{itemize}
These properties readily imply that $\gamma\in\mpath{\fix(\iso^\alpha)}{\iso^\theta}{\tau}$ and there is an inclusion $j^{m,\tau,\alpha,\theta}:\mpath{\fix(\iso^\alpha)}{\iso^\theta}{\tau}\hookrightarrow\mpathMI{mp+1}$ that is a smooth embedding of Hilbert manifolds. Moreover $E^{mp+1}\circ j^{m,\tau,\alpha,\theta}=E^\tau|_{\mpath{\fix(\iso^\alpha)}{\iso^\theta}{\tau}}$. The following Lemma is a variation of \cite[Lemma~2.9]{Grove_Tanaka:On_the_number_of_invariant_closed_geodesics_ACTA} together with \cite[Proposition~3.6]{Grove_Tanaka:On_the_number_of_invariant_closed_geodesics_ACTA}.

\begin{lem}
\label{l:rational}
There exist  bounded functions $\tau:\N\to\N$, $\alpha:\N\to\N$ and $\theta:\N\to\N$ such that, for all $m\in\N$, the inclusion $j^{m,\tau(m),\alpha(m),\theta(m)}$ induces a  homology isomorphism
\[
j^{m,\tau(m),\alpha(m),\theta(m)}_*:\Loc_*(E^{\tau(m)}|_{\mpath{\fix(\iso^{\alpha(m)})}{\iso^{\theta(m)}}{\tau(m)}},\orb(\gamma))\toup^{\cong}\Loc_*(E^{mp+1},\orb(\gamma)).
\]
\end{lem}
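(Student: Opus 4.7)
The plan is to follow the three-step strategy used for Lemma~\ref{l:irrational}, adapted to the rational period situation. First, I would extract from the analysis of Grove and Tanaka in \cite[Lemma~2.9 and Proposition~3.6]{Grove_Tanaka:On_the_number_of_invariant_closed_geodesics_ACTA} bounded functions $\tau,\alpha,\theta:\N\to\N$ satisfying the four arithmetic conditions listed just before the statement, and with the crucial additional property that the Hessian of $E^{\tau(m)}|_{\mpath{\fix(\iso^{\alpha(m)})}{\iso^{\theta(m)}}{\tau(m)}}$ at $\gamma$ and the Hessian of $E^{mp+1}$ at $\gamma$ have the same null space; in particular, the two nullities agree for every $m\in\N$. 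As for the Morse indices, the hypothesis $\avind(\gamma)=0$ combined with Proposition~\ref{p:average_index} forces $\ind(E^{mp+1},\gamma)=0$ for every $m$, and since the Morse index of a restriction is bounded above by the Morse index of the ambient function, both indices at $\gamma$ vanish identically in $m$.

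Next, I would verify that the submanifold $\mpath{\fix(\iso^{\alpha(m)})}{\iso^{\theta(m)}}{\tau(m)}$ is invariant under the gradient flow of $E^{mp+1}$ on $\mpathMI{mp+1}$ computed with respect to the Sobolev metric $G^{mp+1}$ defined in~\eqref{e:Riemannian_metric_mp+1}. Two ingredients are needed. On the one hand, $\fix(\iso^{\alpha(m)})$ is a totally geodesic submanifold of $(M,g)$, being the fixed-point set of the isometry $\iso^{\alpha(m)}$; consequently, for any curve $\zeta$ taking values in $\fix(\iso^{\alpha(m)})$, the $\W$-gradient of $E^{mp+1}$ at $\zeta$ stays tangent to $\fix(\iso^{\alpha(m)})$. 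On the other hand, the same equivariance computation already invoked in Lemma~\ref{l:irrational} to see that the gradient of a periodic curve is periodic shows that, if $\zeta$ additionally satisfies $\iso^{\theta(m)}(\zeta(t))=\zeta(t+\tau(m))$, then so does the vector field $\nabla E^{mp+1}(\zeta)$. Combined, these two observations place $\nabla E^{mp+1}(\zeta)$ in $\Tan_\zeta\mpath{\fix(\iso^{\alpha(m)})}{\iso^{\theta(m)}}{\tau(m)}$ for every $\zeta$ in the submanifold.

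Finally, with the matching of Morse indices and nullities and the gradient-flow invariance of the submanifold both in place, I would conclude by the same standard Morse-theoretic deformation argument invoked at the end of the proof of Lemma~\ref{l:irrational} (see \cite[Theorem~5.1.1]{Mazzucchelli:Critical_point_theory_for_Lagrangian_systems}): the restriction of a negative pseudo-gradient flow to an invariant submanifold on which the index and nullity at a critical orbit are preserved yields an isomorphism between the two corresponding local homology groups, and this isomorphism is precisely the one induced by the inclusion $j^{m,\tau(m),\alpha(m),\theta(m)}$. I expect the most delicate part of the argument to be the first step, namely extracting from Grove and Tanaka's rational Bott-type iteration theory the statement that the null spaces agree along a \emph{bounded} sequence of parameters $(\tau(m),\alpha(m),\theta(m))$; the four arithmetic constraints imposed above are precisely what makes such a bounded choice possible. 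Once this boundedness is secured, the remaining two steps are parallel to the irrational case.
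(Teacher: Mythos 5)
Your proposal matches the paper's proof essentially step for step: the index identity from $\avind(\gamma)=0$ via Proposition~\ref{p:average_index}, the bounded choice of $\tau,\alpha,\theta$ with matching nullities from \cite[Lemma~2.9]{Grove_Tanaka:On_the_number_of_invariant_closed_geodesics_ACTA}, the gradient-flow invariance obtained by combining the totally geodesic property of $\fix(\iso^{\alpha(m)})$ with the equivariance of $\nabla E^{mp+1}$, and the concluding deformation argument from \cite[Theorem~5.1.1]{Mazzucchelli:Critical_point_theory_for_Lagrangian_systems}. This is the same argument as in the paper, so nothing further is needed.
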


\begin{proof}
We proceed as in the proof of Lemma~\ref{l:irrational}. For any given $m\in\N$, if  $\tau$, $\alpha$ and $\theta$ are integers as above, our assumption on the average Morse index of $\gamma$ implies that
\[
\ind(E^\tau|_{\mpath{\fix(\iso^\alpha)}{\iso^\theta}{\tau}},\gamma)=
\ind(E^{mp+1},\gamma)=0.
\]
By \cite[Lemma~2.9]{Grove_Tanaka:On_the_number_of_invariant_closed_geodesics_ACTA}, there exist 
 bounded functions $\tau:\N\to\N$, $\alpha:\N\to\N$ and $\theta:\N\to\N$ such that, for all $m\in\N$, we have
\[
\nul(E^{\tau(m)}|_{\mpath{\fix(\iso^{\alpha(m)})}{\iso^{\theta(m)}}{\tau(m)}},\gamma)=
\nul(E^{mp+1},\gamma)=0,\s\s\forall m\in\N.
\]
Let $G^{mp+1}$ be the standard Riemannian metric on $\mpathMI{mp+1}$, already introduced in~\eqref{e:Riemannian_metric_mp+1},  and $\nabla E^{mp+1}$ the gradient of the energy $E^{mp+1}$ with respect to this Riemannian metric. Since $\fix(\iso^{\alpha(m)})$ is a collection of totally-geodesic submanifolds of $M$, the proof of~\cite[Proposition~3.5]{Grove_Tanaka:On_the_number_of_invariant_closed_geodesics_ACTA} shows that $\nabla E^{mp+1}(\zeta)$ is tangent to $\mpath{\fix(\iso^{\alpha(m)})}{\iso}{mp+1}$ for all $\zeta\in\mpath{\fix(\iso^{\alpha(m)})}{\iso}{mp+1}$. Moreover, a standard computation shows that $\nabla E^{mp+1}(\zeta)$ is tangent to $\mpath{M}{\iso^{\theta(m)}}{\tau(m)}$ for all $\zeta\in\mpath{M}{\iso^{\theta(m)}}{\tau(m)}\cap\mpathMI{mp+1}$. Since 
\[
\mpath{\fix(\iso^{\alpha(m)})}{\iso^{\theta(m)}}{\tau(m)}
=
\mpath{\fix(\iso^{\alpha(m)})}{\iso}{mp+1}
\cap
\mpath{M}{\iso^{\theta(m)}}{\tau(m)},
\]
we conclude that $\mpath{\fix(\iso^{\alpha(m)})}{\iso^{\theta(m)}}{\tau(m)}$ is a submanifold of $\mpathMI{mp+1}$ that is invariant by the gradient flow of $E^{mp+1}$, and the Morse index and nullity of $\gamma$ do not change when we restrict $E^{mp+1}$ to this submanifold. As in the proof of Lemma~\ref{l:irrational}, a standard argument in Morse theory implies our statement.
\end{proof}

\subsection{Bangert's construction}\label{s:bangert_construction}

In the proof of the main results of this section (Lemmas \ref{l:bangert}--\ref{l:bangert_local_homology} and Proposition~\ref{p:bangert_klingenberg}) we will need a homotopy constructed by Bangert in \cite{Bangert:Closed_geodesics_on_complete_surfaces}, that we shall now review with our notation. Fix a period $p\in\N$, and consider a smooth path 
\[\Gamma:[a,b]\to\mpath{M}{\id}{p},\] 
i.e.\ each curve $t\mapsto\Gamma(s)(t)$ is $p$-periodic. For each $m\in\N$, we define an associated continuous path 
\begin{align}\label{e:bangert_Gamma_above}
\Gamma\!_{\langle m\rangle}:[a,b]\to\mpath{M}{\id}{mp} 
\end{align}
in the following way. For the sake of simplicity,  let us assume that $[a,b]=[0,1]$. For each $s\in[0,1]$, we set
\small
\begin{align*}
\Gamma\!_{\langle m\rangle}(\tfrac{s}{m})(t) & = \Gamma(s)(\tfrac{2t}{2-s}), & \forall t\in[0,(1-\tfrac{s}{2})p],\\
\tag*{$(\star)$}
\Gamma\!_{\langle m\rangle}(\tfrac{s}{m})(t) & = \Gamma(4-s-\tfrac{4}{p}t)(p), & \forall t\in[(1-\tfrac{s}{2})p,(1-\tfrac{s}{4})p],\\
\Gamma\!_{\langle m\rangle}(\tfrac{s}{m})(t) & = \Gamma(0)(t+\tfrac{s}{4}p), & \forall t\in[(1-\tfrac{s}{4})p,(m-\tfrac{s}{4})p],\\
\tag*{$(\star)$}
\Gamma\!_{\langle m\rangle}(\tfrac{s}{m})(t) & = \Gamma(\tfrac{4}{p}t + s - 4m)(mp), & \forall t\in[(m-\tfrac{s}{4})p,mp],
\end{align*}
\normalsize
see Figure~\ref{f:bangert}(b). For each $s\in[0,1]$ and $k\in\{1,...,m-2\}$, we set 
\small
\begin{align*}
\Gamma\!_{\langle m\rangle}(\tfrac{k+s}{m})(t) & = \Gamma(1)(2t), & \forall t\in[0,\tfrac{p}{2}],\\
\Gamma\!_{\langle m\rangle}(\tfrac{k+s}{m})(t) & = \Gamma(1)(t-\tfrac{p}{2}) & \forall t\in[\tfrac{p}{2},(k-\tfrac{1}{2})p],\\
\tag*{$(\star)$}
\Gamma\!_{\langle m\rangle}(\tfrac{k+s}{m})(t) & = \Gamma(4k-1-\tfrac{4}{p}t)(kp), & \forall t\in[(k-\tfrac{1}{2})p,(k-\tfrac{1+s}{4})p],\\
\Gamma\!_{\langle m\rangle}(\tfrac{k+s}{m})(t) & = \Gamma(s)(t-\tfrac{1+s}{4}p), & \forall t\in[(k-\tfrac{1+s}{4})p,(k+\tfrac{3-s}{4})p],\\
\tag*{$(\star)$}
\Gamma\!_{\langle m\rangle}(\tfrac{k+s}{m})(t) & = \Gamma(4k+3-\tfrac{4}{p}t)((k+1)p), & \forall t\in[(k+\tfrac{3-s}{4})p,(k+\tfrac{3}{4})p],\\
\Gamma\!_{\langle m\rangle}(\tfrac{k+s}{m})(t) & = \Gamma(0)(t+\tfrac{1}{4}p), & \forall t\in[(k+\tfrac{3}{4})p,(m-\tfrac{1}{4})p],\\
\tag*{$(\star)$}
\Gamma\!_{\langle m\rangle}(\tfrac{k+s}{m})(t) & = \Gamma(\tfrac{4}{p}t-4m+1)(mp), & \forall t\in[(m-\tfrac{1}{4})p,mp],
\end{align*}
\normalsize
see Figure~\ref{f:bangert}(c). Finally, for each $s\in[0,1]$, we set
\small
\begin{align*}
\Gamma\!_{\langle m\rangle}(\tfrac{m-1+s}{m})(t) & = \Gamma(1)(\tfrac{2}{1+s}t), & \forall t\in[0,\tfrac{1+s}{2}p],\\
\Gamma\!_{\langle m\rangle}(\tfrac{m-1+s}{m})(t) & = \Gamma(1)(t+\tfrac{1-s}{2}p) & \forall t\in[\tfrac{1+s}{2}p,(m+\tfrac{s-3}{2})p],\\
\tag*{$(\star)$}\Gamma\!_{\langle m\rangle}(\tfrac{m-1+s}{m})(t) & = \Gamma(2s+4m-5-\tfrac{4}{p}t)((m-1)p), & \forall t\in[(m+\tfrac{s-3}{2})p,(m+\tfrac{s-5}{4})p],\\
\Gamma\!_{\langle m\rangle}(\tfrac{m-1+s}{m})(t) & = \Gamma(s)(t+\tfrac{1-s}{4}p), & \forall t\in[(m+\tfrac{s-5}{4})p,(m+\tfrac{s-1}{4})p],\\
\tag*{$(\star)$}
\Gamma\!_{\langle m\rangle}(\tfrac{m-1+s}{m})(t) & = \Gamma(\tfrac{4}{p}t+1-4m)(mp), & \forall t\in[(m+\tfrac{s-1}{4})p,mp],
\end{align*}
\normalsize
see Figure~\ref{f:bangert}(d). 
\begin{figure}[p]
\footnotesize
\centering
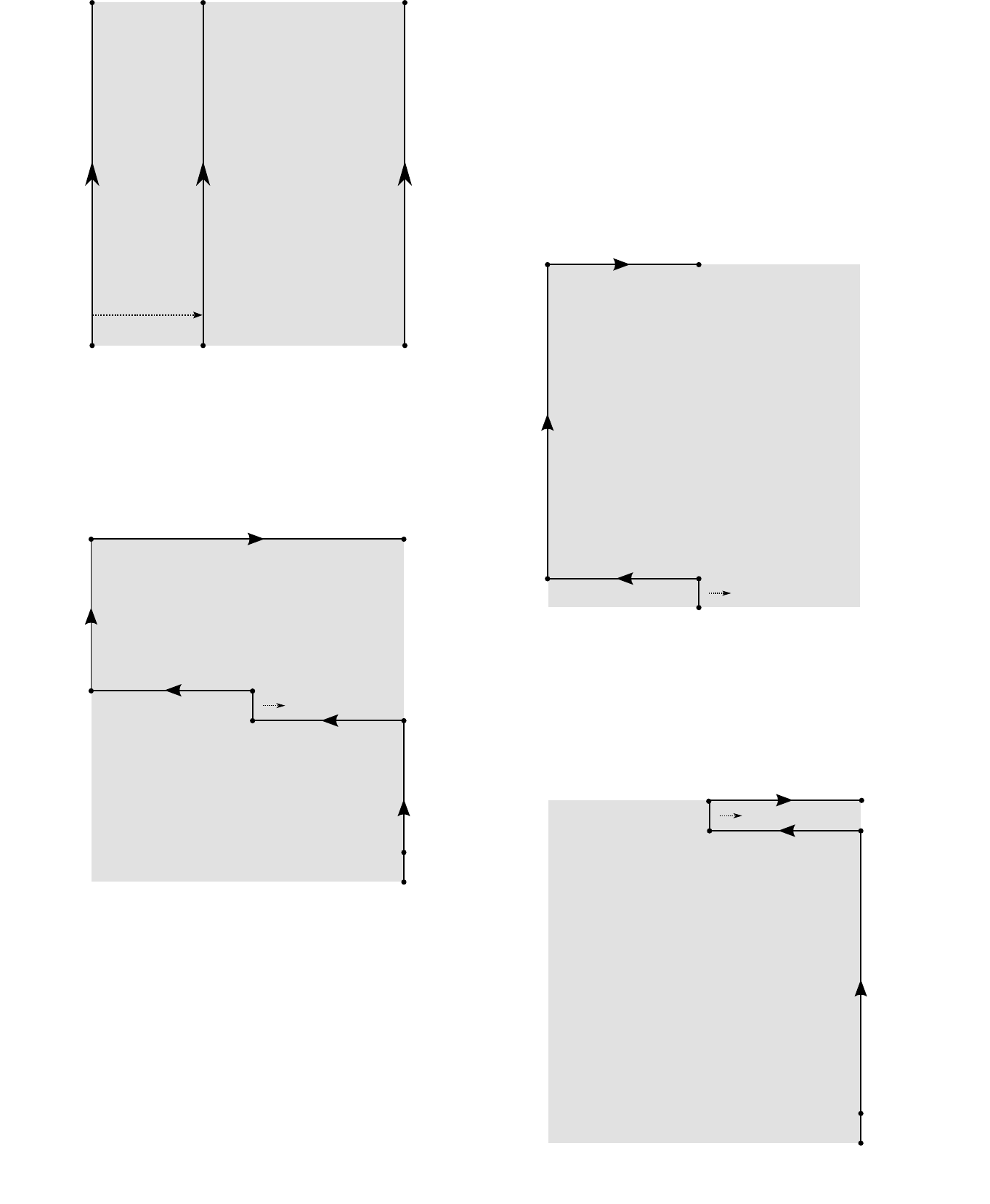 
\normalsize 
\captionstyle{myHang}
\caption{\textbf{(a)} Path $\Gamma:[0,1]\to\mpath{M}{\id}{p}$; the shaded square contains the portion of each curve $t\mapsto\Gamma(s)(t)$ for $t\in[0,mp]$. \textbf{(b--d)} Curve $t\mapsto\Gamma\!_{\langle m\rangle}(s')(t)$ for $t\in[0,mp]$ and suitable values of $s'$; the little horizontal arrows show the direction in which the curve is pulled as $s'$ grows.}
\label{f:bangert}
\end{figure}
If the original path $\Gamma$ is only continuous, the curves $s\mapsto\Gamma(s)(t)$ are continuous as well, but not $\W_{\mathrm{loc}}$. However, we can still define a continuous path $\Gamma\!_{\langle m\rangle}:[a,b]\to\mpath{M}{\id}{mp}$ in a similar way.  All we have to do is modify the equations marked with $(\star)$ in the construction. Each of those equations is of the form
\begin{equation}\label{e:bangert_technicality}
\Gamma\!_{\langle m\rangle}(s')(t)  = \Gamma((-1)^i \tfrac{4}{p} t + s'')(t'), \s\s \forall t\in[t_0,t_1],
\end{equation}
for suitable values $i\in\{1,2\}$, $s',s''\in[0,1]$, $t'\in[0,mp]$, and an interval $[t_0,t_1]\subset[0,mp]$. Let $\delta=\delta(\Gamma)>0$ be such that, for all $r\in\R$ and $r_0,r_1\in[0,1]$ with $|r_0-r_1|\leq\tfrac{4}{p}\delta$, the distance between the points $\Gamma(r_0)(r)$ and $\Gamma(r_1)(r)$ is less than the injectivity radius of $(M,g)$. We set $\lambda:[t_0,t_1]\to M$ to be the piecewise smooth curve such that
\begin{itemize}
\item $t\mapsto g(\dot\lambda(t),\dot\lambda(t))$ is constant,

\item for each non-negative integer $k$ such that $(k+1)\delta<|t_1-t_0|$, the curve $\lambda|_{[t_0+k\delta,t_0+(k+1)\delta]}$ is the (unique) length-minimizing geodesic joining the points $\Gamma((-1)^i \tfrac{4}{p} (t_0+k\delta) + s'')(t')$ and $\Gamma((-1)^i \tfrac{4}{p} (t_0+(k+1)\delta) + s'')(t')$.

\item if $k$ is the maximal non-negative integer such that $k\delta<|t_1-t_0|$, the curve $\lambda|_{[t_0+k\delta,t_1]}$ is the (unique) length-minimizing geodesic joining the points $\Gamma((-1)^i \tfrac{4}{p} (t_0+k\delta) + s'')(t')$ and $\Gamma((-1)^i \tfrac{4}{p} t_1 + s'')(t')$.
\end{itemize}
In the definition of $\Gamma\!_{\langle m\rangle}$, we replace equation~\eqref{e:bangert_technicality} with
\[
\Gamma\!_{\langle m\rangle}(s')(t)  = \lambda(t), \s\s \forall t\in[t_0,t_1].
\]
In this way, $\Gamma\mapsto\Gamma\!_{\langle m\rangle}$ defines a continuous map of the form 
\[C^0([a,b];\mpath{M}{\id}{p})\to C^0([a,b];\mpath{M}{\id}{mp}).\]

Notice that, if we see the path $\Gamma$ as a path in $\mpath{M}{\id}{mp}$ via the inclusion $\mpath{M}{\id}{p}\hookrightarrow\mpath{M}{\id}{mp}$, then $\Gamma$ is homotopic to $\Gamma\!_{\langle m\rangle}$ with fixed endpoints. A possible homotopy $h_\Gamma:[0,1]\times[a,b]\to\mpath{M}{\id}{mp}$ can be defined by setting 
\begin{align}\label{e:homotopy_path_bangert_path}
h_\Gamma(r,s)=
\left\{
  \begin{array}{lcl}
    \Gamma(s) & &\mbox{ if } s\in[r,1],  \\ \\
    (\Gamma|_{[0,r]})_{\langle m\rangle}(s) & &\mbox{ if } s\in[0,r].
  \end{array}
\right.
\end{align}

The main property of this construction is that, by taking $m$ large, the energy of each $mp$-periodic curve $\Gamma\!_{\langle m\rangle}(s)$ can be made almost as small as the energy of $\Gamma(0)$ and $\Gamma(1)$. More precisely
\small
\begin{equation}\label{e:estimate_bangert}
\begin{split}
E^{mp}(\Gamma\!_{\langle m\rangle}(s)) 
& = \frac{1}{mp} \int_0^{mp} g(\tfrac{\diff}{\diff t}\Gamma\!_{\langle m\rangle}(s)(t),\tfrac{\diff}{\diff t}\Gamma\!_{\langle m\rangle}(s)(t))\,\diff t\\
& \leq \frac{1}{mp} \left( (m-2)\max_{s'\in\{0,1\}}\left\{
\int_0^{p} g(\tfrac{\diff}{\diff t}\Gamma(s')(t),\tfrac{\diff}{\diff t}\Gamma(s')(t))\,\diff t
\right\} + C_\Gamma \right)\\
& < \max \left\{E^{p}(\Gamma(0)),E^{p}(\Gamma(1)) \right\} + \frac{C_\Gamma}{mp},
\end{split}
\end{equation}
\normalsize
where $C_\Gamma>0$ is a constant depending continuously on $\Gamma$, but not on $m$.

\subsection{Bangert-Klingenberg lemmas}
Let $\K\subset\N$ be an infinite subset, $p,p_0\in\N$, and $\Omega$ a Hilbert manifold contained in
\[\mpath{M}{\id}{p_0}\cap\displaystyle\bigcap_{m\in\K} \mpathMI{mp+1}\]
as a Hilbert submanifold of each space involved in the intersection. Assume also that the energy functions $E^{p_0}$ and $E^{mp+1}$, for all $m\in\K$, coincide on $\Omega$, i.e.
\begin{align*}
E^{p_0}(\zeta)
=\frac{1}{p_0}\int_0^{p_0} g(\dot\zeta(t),\dot\zeta(t))\,\diff t
=\frac{1}{mp+1}\int_0^{mp+1} g(\dot\zeta(t),\dot\zeta(t))\,\diff t
=E^{mp+1}(\zeta),\\
\forall \zeta\in\Omega,\ m\in\K.
\end{align*}
Examples of such  $\Omega$'s are the manifolds $\Lambda^{m_0,\mu_0}$ and $\mpath{\fix(\iso^{\alpha_0})}{\iso^{\theta_0}}{\tau_0}$ for suitable value of the integer parameters and for suitable $\K$, see Section~\ref{s:iterated_local_homology}. 

The next statements in this section were originally established by Bangert and Klingenberg \cite{Bangert_Klingenberg:Homology_generated_by_iterated_closed_geodesics} in the special case $I=\id$.

\begin{lem}\label{l:bangert}
Consider $c>0$, and let $\Omega'$ be the union of the connected components of $\Omega$ having  non-empty intersection with the sub-level $\{E^{p_0}|_\Omega<c\}$. For each $m\in\K$, consider the homomorphism
\[
\iota^m_*:\Hom_*(\Omega',\{E^{p_0}|_\Omega<c\})\to\Hom_*(\mpathMI{mp+1},\{E^{mp+1}<c\})
\]
induced by the inclusion. Then, for each $h\in\Hom_*(\Omega',\{E^{p_0}|_\Omega<c\})$ we have $\iota^m_*(h)=0$ provided $m$ is large enough.
\end{lem}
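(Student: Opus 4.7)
I will adapt Bangert and Klingenberg's vanishing argument from~\cite{Bangert_Klingenberg:Homology_generated_by_iterated_closed_geodesics} to the isometry-invariant setting using the pulling construction of Section~\ref{s:bangert_construction}. The overall strategy is, given a relative cycle $z$ representing $h$, to build explicitly a $(k+1)$-chain in $\mpathMI{mp+1}$ that bounds $\iota^m z$ modulo chains with energy $<c$.

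First I would represent $h$ by a finite relative singular cycle $z=\sum_{i=1}^{n}a_i\sigma_i$, where each $\sigma_i:\Delta^k\to\Omega'$ is continuous and $\partial z$ is supported in $\{E^{p_0}|_\Omega<c\}$. Compactness of $\Delta^k$ yields a uniform bound $E^{p_0}(\sigma_i(x))\leq C$ for all $i$ and $x$, and since $\Omega'$ was defined as the union of the components of $\Omega$ meeting $\{E^{p_0}|_\Omega<c\}$, I would fix once and for all, in each such component, a basepoint curve $v$ with $E^{p_0}(v)<c$.

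Next, for each $\sigma_i$ I would use the contractibility of $\Delta^k$ onto a vertex $v_0$ together with a pre-chosen path in $\Omega'$ joining $\sigma_i(v_0)$ to the basepoint of its component, to produce a continuous map $H_i:\Delta^k\times[0,1]\to\Omega'$ with $H_i(\,\cdot\,,0)=\sigma_i$ and $H_i(x,1)$ independent of $x$, equal to the basepoint. For each $m\in\K$, applying Bangert's construction of Section~\ref{s:bangert_construction} to each path $s\mapsto H_i(x,s)$ gives a continuous map $\widetilde{H}_i^m:\Delta^k\times[0,1]\to\mpathMI{mp+1}$ with $\widetilde{H}_i^m(\,\cdot\,,0)=\iota^m\sigma_i$ (by endpoint preservation of the Bangert pull and the homotopy~\eqref{e:homotopy_path_bangert_path}), with $\widetilde{H}_i^m(\,\cdot\,,1)$ constant at the basepoint, and with interior curves of energy at most $\max(E^{p_0}(\sigma_i(x)),E^{p_0}(v))+C_i/(mp+1)$ by the estimate~\eqref{e:estimate_bangert}. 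After triangulating $\Delta^k\times[0,1]$ and summing, the chain $w^m=\sum_i a_i\widetilde{H}_i^m$ in $\mpathMI{mp+1}$ has boundary equal to $\pm\iota^m z$ modulo (i) the constant chain at the basepoints, whose energy is $<c$, and (ii) a side chain obtained by applying the same construction to $\partial z$, which also has energy $<c$ because $\partial z$ itself has energy $<c$ and the Bangert pull is controlled by endpoint energies. Choosing $m\in\K$ large enough so that the error $C_i/(mp+1)$ is smaller than the gap between $c$ and the largest basepoint energy then produces a null-homology of $\iota^m z$ in $H_*(\mpathMI{mp+1},\{E^{mp+1}<c\})$.

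\textbf{Main obstacle.} The delicate point is the compatibility of Bangert's construction, which as formulated in Section~\ref{s:bangert_construction} outputs $Np_0$-periodic loops, with the twisted periodicity $\iso(\zeta(t))=\zeta(t+mp+1)$ required for membership in $\mpathMI{mp+1}$. The crucial fact is that every curve entering a splicing phase of type~\eqref{e:bangert_technicality} lies in $\Omega$, which by hypothesis sits inside each $\mpathMI{mp+1}$ for $m\in\K$; by reinterpreting the splicing times in terms of the period $mp+1$ and invoking the $\iso$-equivariance of the endpoint curves to glue the pieces across the twist, the output lands in $\mpathMI{mp+1}$ while the energy estimate~\eqref{e:estimate_bangert} carries over unchanged. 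Verifying this bookkeeping rigorously in both the rational and irrational regimes of Section~\ref{s:iterated_local_homology} is where the technical heart of the argument lies; once it is in place, the energy estimate delivers the desired vanishing.
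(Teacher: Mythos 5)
Your overall strategy (push the cycle into the sublevel via Bangert's pulling construction and the energy estimate \eqref{e:estimate_bangert}) is the same as the paper's, but the point you defer to the end as the ``main obstacle'' is precisely the one genuinely new idea of the proof, and your one-sentence sketch of it does not work as stated. Bangert's construction takes a path of $p_0$-periodic loops and outputs $mp_0$-periodic loops; these are concatenations of arcs of \emph{different} curves of $\Omega$, and there is no reason for such a concatenation to satisfy $\iso(\zeta(t))=\zeta(t+mp+1)$. Nor can one simply ``reinterpret the splicing times in terms of the period $mp+1$'': in general $mp+1$ is not a multiple of $p_0$ (e.g.\ in the irrational case $p_0=\mu_0p$ with $p$ irrational). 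The paper's resolution is a splicing device you would need to supply: for each simplex $\sigma$ one builds \emph{two} parallel homotopies, $h'_\sigma$ with values in the periodic loop space $\mpath{M}{\id}{m_jp_0}$ (where Bangert's construction and the energy killing take place) and $h''_\sigma$ with values in $\Omega$ itself, agreeing at the basepoint $t=0$; the homotopy in $\mpathMI{m''p+1}$ is then defined in \eqref{e:definition_bangert_homotopy_final} by running the closed loop $h'_\sigma(s,z)$ on $[0,m'p_0]$ (with $m'p_0\leq m''p+1$ a multiple of its period) and the tail of the $\iso$-invariant curve $h''_\sigma(s,z)$ on $[m'p_0,m''p+1]$. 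The loop part contributes energy below $c$, and the $\Omega$-tail, though of uncontrolled pointwise energy, occupies a time interval of bounded length, so its contribution to $E^{m''p+1}$ is $O(1/m'')$. Without this (or an equivalent device) your maps $\widetilde H^m_i$ do not land in $\mpathMI{mp+1}$, and the lemma is not proved.

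A second, smaller gap: you contract each top-dimensional simplex $\sigma_i$ to a basepoint independently, so the homotopies of two simplexes sharing a face need not agree on that face; the prisms over interior faces then fail to cancel in $\partial w^m$, and these leftover terms pass through curves of energy up to $C\gg c$. Repairing this forces essentially the paper's inductive skeleton-by-skeleton construction, with the face-compatibility condition (v) and Long's device of applying Bangert's construction along the segments of $\Delta^j$ parallel to a fixed line $\Line$ in order to extend the homotopy from $\partial\Delta^j$ to $\Delta^j$. (Your observation that only the \emph{endpoint} energies of each pulled path enter the estimate, so that the side chain over $\partial z$ ends up below level $c$ for $m$ large, is correct and is also how the paper concludes.)
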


\begin{proof}
The statement is straightforward in homological degree zero, since the group $\Hom_0(\Omega',\{E^{p_0}|_\Omega<c\})$ is trivial. Now, consider a non-zero  $h\in\Hom_d(\Omega',\{E^{p_0}|_\Omega<c\})$ for some $d\geq 1$ (if it exists). Let $\mu$ be a relative cycle representing  $h$. We denote by $\Sigma^j(\mu)$ the set of singular simplexes that are $j$-dimensional faces of the simplexes in the chain $\mu$. 

By modifying $\mu$ within the same homology class $h$ if necessary, we can assume that all the 0-simplexes in $\Sigma^0(\mu)$ are already contained in the sublevel $\{E^{p_0}|_\Omega<c\}$. We set $m_0:=1$ and, for all $\kappa\in\Sigma^0(\mu)$, we define the maps $h_\kappa',h_\kappa'':[0,1]\times\Delta^0\to\Omega$ as $h_\kappa'(t,z)=h_\kappa''(t,z):=\kappa(z)$. Here, $\Delta^0$ is the $0$-dimensional standard simplex, i.e.\ a point.

For each degree $j\in\{1,...,d\}$ we will find $m_j\in\N$ and, for each $\sigma\in\Sigma^j(\mu)$, two homotopies 
\begin{align}
\label{e:homological_bangert_1} &h_\sigma':[0,1]\times\Delta^j\to\mpath{M}{\id}{m_j p_0}\\
\label{e:homological_bangert_2} &h_\sigma'':[0,1]\times\Delta^j\to\Omega
\end{align}
with the following properties.
\begin{itemize}
\item[(i)] $m_{j-1}$ divides $m_{j}$,
\item[(ii)] $h_\sigma'(0,\cdot)=h_\sigma''(0,\cdot)=\sigma$,
\item[(iii)] $h_\sigma'(s,z)(0)=h_\sigma''(s,z)(0)$ for all $(s,z)\in[0,1]\times\Delta^j$,
\item[(iv)] $E^{m_jp_0}(h_\sigma'(1,z))<c$ for all $z\in\Delta^j$,
\item[(v)] $h_\sigma'(s,f_k(z))=h_{\sigma\circ f_k}'(s,z)$ and $h_\sigma''(s,f_k(z))=h_{\sigma\circ f_k}''(s,z)$ for all $k\in\{0,...,j\}$, where $f_k:\Delta^{j-1}\to\Delta^j$ is the standard affine map onto the $k$-th face of $\Delta^j$.
\end{itemize}

Let us assume that we have such a family of homotopies. Let $m''\in\K$ be a (large) integer that we will fix later, and let $m'$ be the maximal multiple of $m_d$ such that $m'p_0$ is less than of equal to $m''p+1$. For each  $j\geq1$ and singular simplex $\sigma\in\Sigma^j(\mu)$ we define the homotopy
\begin{align*}
&h_\sigma:[0,1]\times\Delta^j\to\mpathMI{m''p+1}
\end{align*}
by 
\begin{align}\label{e:definition_bangert_homotopy_final}
h_\sigma(s,z)(t)
=
\left\{
  \begin{array}{lll}
    h_\sigma'(s,z)(t) & & t\in[0,m'p_0], \\ \\
    h_\sigma''(s,z)(t) & & t\in[m'p_0,m''p+1].
  \end{array}
\right.
\end{align}
Notice that $h_\sigma(s,z)$ is a well-defined curve in $\mpathMI{m''p+1}$. Indeed
\begin{align*}
h_\sigma'(s,z)(m'p_0)=h_\sigma'(s,z)(0)=h_\sigma''(s,z)(0)=h_\sigma''(s,z)(m'p_0),
\end{align*}
and thus $h_\sigma(s,z)|_{[0,m''p+1]}$ is a continuous curve obtained by joining the $\W$ curves $h_\sigma'(s,z)|_{[0,m'p_0]}$ and $h_\sigma''(s,z)|_{[m'p_0,m''p+1]}$. Moreover, since $h_\sigma''(s,z)\in\Omega$, we have $\iso(h_\sigma''(s,z)(0))=h_\sigma''(s,z)(m''p+1)$, and therefore
\begin{align*}
\iso(h_\sigma'(s,z)(0))=I(h_\sigma''(s,z)(0))=h_\sigma''(s,z)(m''p+1),
\end{align*}
which proves that $h_\sigma(s,z)$ is an $\iso$-invariant curve with time-shift $m''p+1$.

The energy of $h_\sigma(1,z)$ can be estimated as follows:
\begin{align*}
E^{m''p+1}(h_\sigma(1,z))
& =
\frac{1}{m''p+1}
\bigg(
\int_0^{m'p_0}
g( \tfrac{\diff}{\diff t}h_\sigma'(1,z)(t),\tfrac{\diff}{\diff t}h_\sigma'(1,z)(t) )\,\diff t\\
& \s\s\s\s\s\s+
\int_{m'p_0}^{m''p+1}
g( \tfrac{\diff}{\diff t}h_\sigma''(1,z)(t),\tfrac{\diff}{\diff t}h_\sigma''(1,z)(t) )\,\diff t
\bigg)\\
& \leq
\frac{1}{m''p+1}
\Big(
m'p_0 E^{m'p_0}
(h_\sigma'(1,z))
+
m_d\, p_0 E^{m_d p_0}
(h_\sigma''(1,z))
\Big)\\
& =
\frac{1}{m''p+1}
\Big(
m'p_0 \underbrace{E^{m_j p_0}
(h_\sigma'(1,z))}_{\leq c-\epsilon_\sigma}
+
m_d\, p_0 \underbrace{E^{m_d p_0}
(h_\sigma''(1,z))}_{\leq c_\sigma}
\Big)\\
& \leq
c
-
\epsilon_\sigma
+
\frac{m_d\, p_0\, c_\sigma}{m''p+1},
\end{align*}
where $\epsilon_\sigma>0$ is a quantity given by condition~(iv), and 
\[c_\sigma:=\max_{z\in\Delta^j} E^{m_d p_0}(h_\sigma''(1,z)).\] 
Therefore, if we choose $m''\in\K$ large enough, for each $j\geq1$ and singular simplex $\sigma\in\Sigma^j(\mu)$, we have
\begin{align*}
E^{m''p+1}(h_\sigma(1,z))<c.
\end{align*}

By the homotopy invariance property for representatives of relative homology classes (see \cite[Lemma~1]{Bangert_Klingenberg:Homology_generated_by_iterated_closed_geodesics} or \cite[page~146]{Mazzucchelli:Critical_point_theory_for_Lagrangian_systems}), the existence of the family of homotopies defined in~\eqref{e:definition_bangert_homotopy_final} implies that $\mu$, seen as a relative chain in the pair $(\mpathMI{m''p+1},\{E^{m''p+1}<c\})$, is homologous to a relative chain contained in the sub level $\{E^{m''p+1}<c\}$. In particular  $[\mu]=0$ in $\Hom_d(\mpathMI{m''p+1},\{E^{m''p+1}<c\})$.

In order to complete the proof we only have to find suitable integers $m_j$ and construct the homotopies~\eqref{e:homological_bangert_1} and~\eqref{e:homological_bangert_2} satisfying properties (i--v). We do this inductively on the homological degree of the involved singular simplexes, starting in degree 1 and going up with the dimension.

Let $\sigma\in\Sigma^1(\mu)$. Notice that $\Delta^1=[0,1]$ and $\Omega \subset \mpath{M}{\id}{p_0}$. Thus, let us consider $\sigma$ as a continuous path of the form
\[\sigma:[0,1]\to\mpath{M}{\id}{p_0}.\] 
By our assumptions on the elements of $\Sigma^0(\mu)$, we have $E^{p_0}(\sigma(z))<c$ for $z=0$ and $z=1$. Let $m$ be a positive integer that we will fix later, and consider the continuous path
\[\sigma_{\langle m\rangle}:[0,1]\to\mpath{M}{\id}{m p_0}\]
obtained by applying Bangert's construction of Section~\ref{s:bangert_construction} to $\sigma$. We define the map $h_\sigma':[0,1]\times\Delta^1\to\mpath{M}{\id}{m p_0}$ to be a homotopy as in equation~\eqref{e:homotopy_path_bangert_path}, i.e.
\begin{align*}
h_\sigma'(s,z)=
\left\{
  \begin{array}{lcl}
    (\sigma|_{[0,s]})_{\langle m\rangle}(z) &  & \mbox{ if }z\in[0,s], \\\\ 
    \sigma(z) &  & \mbox{ if }z\in[s,1]. 
  \end{array}
\right.
\end{align*}
In particular $h_\sigma'(0,\cdot)=\sigma$,  $h_\sigma'(1,\cdot)=\sigma_{\langle m\rangle}$, and $h_\sigma'(s,z)=\sigma(z)$ for all $s\in[0,1]$ and $z\in\partial\Delta^1=\{0,1\}$. We define  $h_\sigma'':[0,1]\times\Delta^1\to\Omega$ to be the unique map such that 
\[
h_\sigma''(s,z)|_{[0,p_0]}=h_\sigma'(s,z)|_{[0,p_0]},\s\s\forall s\in[0,1],\ z\in\Delta^1.
\]
By an estimate as in~\eqref{e:estimate_bangert}, if we fix $m\in\N$ large enough, for all $\sigma\in\Sigma^1(\mu)$ and $z\in\Delta^1$ we obtain $E^{m p_0}(\sigma_{\langle m\rangle}(z))<c$. Thus, we set $m_1:=m$. Notice that $h_\sigma'$, $h_\sigma''$ and $m_1$ satisfy assumptions (i--v) listed above when $j=1$.

Now, let us proceed iteratively with the construction: assuming we are done up to degree $j-1$,  we show how to make the next step for $j$-simplexes in $\Sigma^{j}(\mu)$. We consider $m\in\N$ that is a (large) multiple of $m_{j-1}$, and we will fix it later. Up to a minor modification in the previous steps, we can assume that, for each $i<j$, $\kappa\in\Sigma^i(\mu)$ and $s\in[\tfrac12,1]$, we have $h_\kappa'(s,\cdot)=h_\kappa'(1,\cdot)$ and  $h_\kappa''(s,\cdot)=h_\kappa''(1,\cdot)$. Let $\sigma\in\Sigma^j(\mu)$. We begin by putting together the homotopies of the faces of $\sigma$, in such a way that we obtain a continuous map
\[
h_{\partial\sigma}':[0,1]\times\partial \Delta^j\to\mpath{M}{\id}{m p_0}.
\]
Notice that $E^{mp_0}(h_{\partial\sigma}'(s,z))<c$ for all $s\in[0,1]$ and $z\in\partial\Delta^j$. Moreover $h_{\partial\sigma}'(s,\cdot)=h_{\partial\sigma}'(1,\cdot)$ for all $s\in[\tfrac12,1]$. Now consider a retraction 
\[
r:[0,\tfrac12]\times\Delta^j\to([0,\tfrac12]\times\partial \Delta^j)\cup(\{0\}\times\Delta^j).
\]
We define 
\[h_\sigma':[0,\tfrac12]\times\Delta^j\to\mpath{M}{\id}{m_{j-1} p_0}\subset\mpath{M}{\id}{m  p_0}\] 
by $h_\sigma':=h_{\partial\sigma}'\circ r$. Set $\tilde\sigma:=h_\sigma'(\tfrac12,\cdot)$, and see it as a map of the form 
\[\tilde\sigma:\Delta^j\to\mpath{M}{\id}{m_{j-1} p_0}.\]
As in \cite[page~461]{Long:Multiple_periodic_points_of_the_Poincare_map_of_Lagrangian_systems_on_tori}, let $\Line\subseteq\R^j$ be the 1-dimensional vector subspace generated by the vector pointing to the  barycenter of standard $j$-simplex $\Delta^j\subset\R^j$. For each $s\in[0,1]$ we denote by $s\Delta^j$ the rescaled $j$-simplex given by $\{s z\,|\, z\in\Delta^j\}$. For each $z\in s\Delta^j$, we define $[a(s,z),b(s,z)]$ to be the maximum segment inside $s\Delta^j$ that contains $z$ and is parallel to $\Line$. Notice that $a$ and $b$ are continuous functions on their domains. We define the other piece of homotopy
\[h_\sigma':[\tfrac12,1]\times\Delta^j\to\mpath{M}{\id}{m  p_0}\] 
by
\begin{align*}
h_\sigma'(s,z)
=
\left\{
  \begin{array}{lll}
    (\tilde\sigma|_{[a(2s-1,z),b(2s-1,z)]})_{\langle m \rangle}(z) &  & \mbox{if }z\in (2s-1)\Delta^j,\\\\ 
    \tilde\sigma(z) &  & \mbox{if }z\not\in (2s-1)\Delta^j. 
  \end{array}
\right.
\end{align*}
Basically here we are piecing together Bangert's homotopies (described in Section~\ref{s:bangert_construction}) of each  path $\tilde\sigma|_{[a(1,z),b(1,z)]}:[a(1,z),b(1,z)]\to\mpath{M}{\id}{m_{j-1} p_0}$.  We define  $h_\sigma'':[0,1]\times\Delta^1\to\Omega$ to be the unique map such that 
\[
h_\sigma''(s,z)|_{[0,p_0]}=h_\sigma'(s,z)|_{[0,p_0]},\s\s\forall s\in[0,1],\ z\in\Delta^j.
\]

If we fix $m\in\N$ to be a sufficiently large multiple of $m_{j-1}$, an estimate as in~\eqref{e:estimate_bangert} implies that $E^{mp_0}(h_\sigma'(1,z))<c$  for all $\sigma\in\Sigma^j(\mu)$ and $z\in\Delta^j$. We set $m_j:=m$. As for the case in degree~1, we have that $h_\sigma'$, $h_\sigma''$ and $m_j$ satisfy assumptions (i--v) listed above.
\end{proof}

From   Lemma~\ref{l:bangert} we can infer an analogous statement concerning the local homology of periodic $\iso$-invariant geodesics.

\begin{lem}\label{l:bangert_local_homology}
Let $\gamma$ be a critical point of $E$ that is periodic of basic period $p\geq1$. Assume that there exist a degree $d\geq 2$ and an infinite set $\K\subset\N$ such that, for all $m\in\K$, the local homology $\Loc_d(E,\orb(\gamma^{mp+1}))$ is non-trivial. Then, there are arbitrarily large $m\in\K$ such that the homomorphisms
\begin{align*}
j^m_*:\Loc_d(E,\orb(\gamma^{mp+1}))\to\Hom_d(\pathMI,\{E<E(\gamma^{mp+1})\}) 
\end{align*}
induced by the inclusion are not injective.
\end{lem}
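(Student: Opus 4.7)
The plan is to combine the local-homology identifications of Lemmas~\ref{l:irrational}--\ref{l:rational} with the vanishing result of Lemma~\ref{l:bangert}: the former let me realize a non-zero class in $\Loc_d(E,\orb(\gamma^{mp+1}))$ as coming from a \emph{fixed} Hilbert submanifold $\Omega$, and the latter then kills that class once it is pushed into $\mpathMI{mp+1}$. As a preliminary, I first argue that $\avind(\gamma)=0$. Indeed $\Loc_*(E,\orb(\gamma^{mp+1}))$ is supported in the window $[\ind(\gamma^{mp+1}),\ind(\gamma^{mp+1})+\nul(\gamma^{mp+1})+1]$, and the Grove--Tanaka theory shows the nullity is uniformly bounded in $m$; so $\Loc_d\neq 0$ in a fixed degree for infinitely many $m$ forces $\ind(\gamma^{mp+1})$ to remain bounded, which by Proposition~\ref{p:average_index} gives $\avind(\gamma)=0$ and in fact $\ind(\gamma^{mp+1})=0$ for every $m$. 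Hence the iterated local-homology lemmas of Section~\ref{s:iterated_local_homology} are available.

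Next I extract a single submanifold $\Omega$ working for infinitely many iterates. If $p\notin\Q$, the function $\mu(\cdot)$ of Lemma~\ref{l:irrational} is bounded, and $\Lambda^{m,\mu}$ depends only on $m\bmod\mu$; a pigeonhole on $(\mu(m),m\bmod\mu(m))$ produces an infinite $\tilde\K\subseteq\K$ and fixed integers $(m_0,\mu_0)$ for which $\Omega:=\Lambda^{m_0,\mu_0}$ and $p_0:=\mu_0 p$ satisfy
\[
\Omega\subset\mpath{M}{\id}{p_0}\cap\bigcap_{m\in\tilde\K}\mpathMI{mp+1},\qquad E^{p_0}|_\Omega=E^{mp+1}|_\Omega.
\]
If $p\in\Q$, the bounded triple $(\tau,\alpha,\theta)$ of Lemma~\ref{l:rational} is constant on some infinite $\tilde\K\subseteq\K$ at values $(\tau_0,\alpha_0,\theta_0)$, and I set $\Omega:=\mpath{\fix(\iso^{\alpha_0})}{\iso^{\theta_0}}{\tau_0}$ with $p_0:=(\alpha_0/\gcd(\alpha_0,\theta_0))\,\tau_0$, the smallest period after which $\iso^{\theta_0}$ is trivial on $\fix(\iso^{\alpha_0})$. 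Density of smooth curves, together with the fact that $\iso^{\theta_0}$ preserves $g(\dot\zeta,\dot\zeta)$ and therefore makes it $\tau_0$-periodic, yields again $\Omega\subset\mpath{M}{\id}{p_0}$ and $E^{p_0}|_\Omega=E^{mp+1}|_\Omega$ for all $m\in\tilde\K$. In either case $\Omega$ is admissible for Lemma~\ref{l:bangert}.

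To conclude, fix a non-zero $h\in\Loc_d(E^{p_0}|_\Omega,\orb(\gamma))$, which exists since Lemmas~\ref{l:irrational}/\ref{l:rational} identify this group with the non-zero $\Loc_d(E,\orb(\gamma^{mp+1}))$. Because $d\geq 2$, the orbit $\orb(\gamma)$ is not a local minimum of $E^{p_0}|_\Omega$ (a local minimum has local homology concentrated in degrees $\leq 1$), so $\orb(\gamma)$ lies in the component $\Omega'$ of $\Omega$ that meets $\{E^{p_0}|_\Omega<c\}$ with $c:=E^{p_0}(\gamma)$, and $h$ has a well-defined image $[h]_\Omega\in\Hom_d(\Omega',\{E^{p_0}|_\Omega<c\})$. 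I then contemplate the commutative square
\[
\begin{array}{ccc}
\Loc_d(E^{p_0}|_\Omega,\orb(\gamma)) & \xrightarrow{\ \cong\ } & \Loc_d(E^{mp+1},\orb(\gamma)) \\
\downarrow & & \downarrow \\
\Hom_d(\Omega',\{E^{p_0}|_\Omega<c\}) & \xrightarrow{\ \iota^m_*\ } & \Hom_d(\mpathMI{mp+1},\{E^{mp+1}<c\})
\end{array}
\]
whose top map is the Lemma~\ref{l:irrational}/\ref{l:rational} isomorphism and whose other maps are induced by inclusions. Applying Lemma~\ref{l:bangert} to $[h]_\Omega$ gives $\iota^m_*([h]_\Omega)=0$ for all sufficiently large $m\in\tilde\K$, so the non-zero image of $h$ in $\Loc_d(E^{mp+1},\orb(\gamma))$ is killed by the right-hand vertical map. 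Transporting via the diffeomorphism $\Psi^{mp+1}$, which carries $(\mpathMI{mp+1},\{E^{mp+1}<c\})$ to $(\pathMI,\{E<E(\gamma^{mp+1})\})$, yields a non-zero element of $\ker j^m_*$ for arbitrarily large $m\in\K$. The substantive obstacle is hidden inside Lemma~\ref{l:bangert} and rests on Bangert's homotopy construction of Section~\ref{s:bangert_construction}; the remaining difficulty here is the bookkeeping needed to isolate a single $\Omega$ (independent of $m$) on which one can quote that lemma.
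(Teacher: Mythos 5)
Your proposal is correct and follows essentially the same route as the paper: establish $\avind(\gamma)=0$, pigeonhole the bounded Grove--Tanaka data to fix a single $\Omega$ and $p_0$, use the not-a-local-minimum observation (from $d\geq2$) to place $\orb(\gamma)$ in $\Omega'$, and then run the commutative square through Lemma~\ref{l:bangert}. The only deviations are cosmetic (a slightly roundabout index-boundedness argument and a smaller admissible choice of $p_0$ in the rational case), neither of which affects the argument.
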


\begin{proof}
Since the local homology $\Loc_d(E,\orb(\gamma^{mp+1}))$ is non-trivial, we infer that $\ind(\gamma^{mp+1})\leq d$ for all $m\in\K$, and therefore $\avind(\gamma)=0$. Thus we can apply the results of  Section~\ref{s:iterated_local_homology}. Let us employ the (equivalent) variational setting introduced there. Our assumption on the local homology can be rephrased by saying that   $\Loc_d(E^{mp+1},\orb(\gamma))$ is non-trivial for all $m\in\K$. Let us consider separately the cases in which the period $p$ of $\gamma$ is irrational or rational.

If $p$ is irrational, let us consider the bounded function $\mu:\N\to\N$ of Lemma~\ref{l:irrational}. By the pigeonhole principle we can find $m_0,\mu_0\in\N$ and an infinite subset $\K'\subset\K$ such that $m\equiv m_0\mod\mu_0$ and $\mu(m)=\mu_0$ for all $m\in\K'$.  As we remarked right before Lemma~\ref{l:irrational}, we have that $\Lambda^{m,\mu_0}=\Lambda^{m_0,\mu_0}\subset\mpathMI{mp+1}$, and $E^{\mu_0p}=E^{mp+1}$ on this space. Lemma~\ref{l:irrational} implies that the inclusion induces an isomorphism
\[
\Loc_*(E^{\mu_0p},\orb(\gamma))\toup^{\cong}\Loc_*(E^{mp+1},\orb(\gamma)),\s\s\forall m\in\K'.
\]
We set $p_0:=\mu_0p$, and recall that every $\zeta\in\Lambda^{m_0,\mu_0}$ is a $p_0$-periodic curve. We also set $\Omega:=\Lambda^{m_0,\mu_0}$.

In the other case, when $p$ is rational, we apply the pigeonhole principle to the bounded  functions $\tau,\alpha,\theta:\N\to\N$ of Lemma~\ref{l:rational}. We find $\tau_0,\alpha_0,\theta_0\in\N$ and an infinite subset $\K'\subset\K$ such that $\tau(m)=\tau_0$, $\alpha(m)=\alpha_0$ and $\theta(m)=\theta_0$ for all $m\in\K'$. We recall that, for all $m\in\K'$, we have that  $\mpath{\fix(\iso^{\alpha_0})}{\iso^{\theta_0}}{\tau_0}\subset\mpathMI{mp+1}$, and the functions $E^{\tau_0}$ and $E^{mp+1}$ coincide on $\mpath{\fix(\iso^{\alpha_0})}{\iso^{\theta_0}}{\tau_0}$. Lemma~\ref{l:rational} implies that the inclusion induces an isomorphism
\[
\Loc_*(E^{\tau_0}|_{\mpath{\fix(\iso^{\alpha_0})}{\iso^{\theta_0}}{\tau_0}},\orb(\gamma))\toup^{\cong}\Loc_*(E^{mp+1},\orb(\gamma)),\s\s\forall m\in\K'.
\]
We set $p_0:=\tau_0\alpha_0$. As before, every $\zeta\in\mpath{\fix(\iso^{\alpha_0})}{\iso^{\theta_0}}{\tau_0}$ is a $p_0$-periodic curve, and we set $\Omega:=\mpath{\fix(\iso^{\alpha_0})}{\iso^{\theta_0}}{\tau_0}$.

In either case ($p$ rational or irrational), $\Omega$ satisfies the assumptions required in  Lemma~\ref{l:bangert}. Let $\Omega'$ be the union of the connected components of $\Omega$ that intersect the sub-level $\{E^{p_0}<c\}$, where $c=E^{p_0}(\gamma)$. Notice that the orbit of $\gamma$ is not a local minimum of the energy $E^{p_0}|_{\Omega}$, otherwise we would have 
\[\Loc_d(E^{p_0}|_{\Omega},\orb(\gamma))=\Hom_d(\orb(\gamma))\simeq\Hom_d(S^1)=0,\]
whereas $\Loc_d(E^{p_0}|_{\Omega},\orb(\gamma))$ is non-trivial by our assumptions. Therefore $\orb(\gamma)$ is contained in $\Omega'$, and we have the following commutative diagram where all the homomorphisms are induced by inclusions.
\begin{align*}
\xymatrix{
\Loc_d(E^{p_0}|_{\Omega},\orb(\gamma))
\ar[rr]^{\cong}
\ar[d]
&&
\Loc_d(E^{mp+1},\orb(\gamma))
\ar[d]^{\tilde j^m_*}
\\
\Hom_d(\Omega',\{E^{p_0}|_{\Omega}<c\})
\ar[rr]^{\iota^m_*\ \ \ \ }
&&
\Hom_d(\mpathMI{mp+1},\{E^{mp+1}<c\})
}
\end{align*}
Our statement follows from Lemma~\ref{l:bangert}.
\end{proof}

After these preliminaries, we can now state and prove the main result of this section.

\begin{prop}\label{p:bangert_klingenberg}
Let $\gamma$ be a critical point of $E$ that is periodic of basic period $p\geq1$. If there exists a degree $d\geq2$ such that, for infinitely many $m\in\N$, the local homology $\Loc_d(E,\orb(\gamma^{mp+1}))$ is non-trivial, then the Riemannian manifold $(M,g)$ contains infinitely many $\iso$-invariant geodesics.
\end{prop}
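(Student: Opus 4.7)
The plan is to argue by contradiction: suppose $(M,g)$ admits only finitely many $\iso$-invariant geodesics. Grove's theorem, recalled in the Introduction, then forces all of them to be closed, so every critical orbit of $E$ on $\pathMI$ is an iterate of one of finitely many closed geodesics $\gamma_1,\ldots,\gamma_r$; in particular, the critical values of $E$ form a discrete set and every critical orbit is isolated.

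The first step is to show that the standing hypothesis forces $\avind(\gamma)=0$. Since the local homology $\Loc_d(E,\orb(\gamma^{mp+1}))$ vanishes in degrees below $\ind(\gamma^{mp+1})$, the assumption that it is non-trivial for infinitely many $m$ yields $\ind(\gamma^{mp+1})\le d$ along such $m$, and Proposition~\ref{p:average_index} then gives $\avind(\gamma)=0$. Lemma~\ref{l:bangert_local_homology} therefore applies and produces an arbitrarily large $m_\ast\in\N$ for which the inclusion-induced map
\[
j_\ast^{m_\ast}\colon\Loc_d(E,\orb(\gamma^{m_\ast p+1}))\longrightarrow\Hom_d(\pathMI,\{E<c_\ast\}),\qquad c_\ast:=E(\gamma^{m_\ast p+1}),
\]
fails to be injective, and I would pick a non-zero $h$ in its kernel. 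Because critical orbits are isolated, I can select $b_\ast>c_\ast$ so that $(c_\ast,b_\ast]$ contains no critical values of $E$; the general injectivity property recalled in Section~\ref{s:Preliminaries} then sends $h$ to a non-zero class in $\Hom_d(\{E<b_\ast\},\{E<c_\ast\})$, while its image in $\Hom_d(\pathMI,\{E<c_\ast\})$ still vanishes. The long exact sequence of the triple $(\pathMI,\{E<b_\ast\},\{E<c_\ast\})$ therefore produces a non-trivial class $\tilde h\in\Hom_{d+1}(\pathMI,\{E<b_\ast\})$ whose connecting image $\partial\tilde h$ coincides with the image of $h$.

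The hard part, and the main obstacle in this plan, is to convert the existence of $\tilde h$ into a contradiction with the assumed finiteness of $\iso$-invariant geodesics. The strategy is a cascading Morse-theoretic analysis: by the Morse inequality, $\tilde h$ is witnessed by local homology contributions in degree $d+1$ at critical orbits with energy $\ge b_\ast$, and these are iterates of $\gamma_1,\ldots,\gamma_r$. Proposition~\ref{p:average_index} rules out contributions coming from iterates of geodesics with positive average Morse index once $m_\ast$ (and hence $b_\ast$) is taken large enough, since the Morse index then exceeds $d+1$ and the relevant local homology vanishes. For the remaining closed geodesics with $\avind=0$, I would combine the Grove–Tanaka finiteness of isomorphism classes of iterated local homologies (reviewed in Section~\ref{s:iterated_local_homology}) with a second application of Lemma~\ref{l:bangert_local_homology}, this time in degree $d+1$, to show that the relevant $(d+1)$-dimensional local homology classes also fail to survive in the global homology, so that $\tilde h$ cannot be sustained in the limit $m_\ast\to\infty$. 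This would force $\tilde h=0$, contradicting $\partial\tilde h\ne 0$, and hence establish the proposition.
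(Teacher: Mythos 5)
Your setup (contradiction, Grove's theorem forcing all invariant geodesics to be closed, deducing $\avind(\gamma)=0$, and invoking Lemma~\ref{l:bangert_local_homology} to get non-injectivity of $j^{m}_*$ for arbitrarily large $m$) is correct and matches the first part of the paper's argument. The gap is in the last paragraph: the ``cascading Morse-theoretic analysis'' that is supposed to kill the class $\tilde h\in\Hom_{d+1}(\pathMI,\{E<b_\ast\})$ is not a proof, and as stated it cannot be made into one. A second application of Lemma~\ref{l:bangert_local_homology} in degree $d+1$ would only tell you that, for \emph{some} arbitrarily large iterates of \emph{some} geodesic, the map $\Loc_{d+1}\to\Hom_{d+1}(\pathMI,\cdot)$ fails to be injective; it does not give the vanishing of the fixed group $\Hom_{d+1}(\pathMI,\{E<b_\ast\})$, which is what you need. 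The Morse inequality bounds $\rank\Hom_{d+1}$ by the \emph{ranks of the local homology groups} above level $b_\ast$, not by the ranks of their images in global homology, so ``failing to survive in the global homology'' does not make the contribution disappear from the count. Also, $\tilde h$ lives at a fixed level $b_\ast$; there is no ``limit $m_\ast\to\infty$'' in which it could fail to be sustained.

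The missing idea is a maximality trick performed \emph{before} invoking Lemma~\ref{l:bangert_local_homology}. For each of the finitely many geodesics $\gamma_i$, consider the set $D_i$ of degrees in which $\Loc_*(E,\orb(\gamma_i^{mp_i+1}))$ is non-trivial for infinitely many $m$ (each $D_i$ is bounded by $2\dim M$ because $\avind=0$ forces $\ind=0$), and replace $(\gamma,p,d)$ by the triple realizing the \emph{maximal} $d_i=\max D_i$. With this choice, only finitely many critical orbits in the whole problem have non-trivial local homology in degree $d+1$, so for $m$ large the Morse inequalities give
\[
\Hom_{d+1}\big(\{E<c+\epsilon\},\{E<c\}\big)=0
\quad\text{and}\quad
\Hom_{d+1}\big(\pathMI,\{E<c+\epsilon\}\big)=0,
\qquad c=E(\gamma^{mp+1}),
\]
and the long exact sequences of the relevant triples then show that $j^m_*$ is \emph{injective} for all sufficiently large $m$. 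This directly contradicts Lemma~\ref{l:bangert_local_homology} with no need to analyze any particular class $\tilde h$. In short: the paper derives injectivity for all large $m$ first and then contradicts the lemma, whereas you invoke the lemma first and then need a vanishing statement in degree $d+1$ that your hypotheses (without the degree-maximality reduction) do not provide.
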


\begin{proof}
We prove the proposition by contradiction, assuming that there are only finitely many $\iso$-invariant geodesics $\ell_1,...,\ell_r$. By \cite[Theorem~2.4]{Grove:Isometry_invariant_geodesics}, all these geodesics must be closed. Let $\gamma_i:\R\to M$ be a parametrization of $\ell_i$ with constant speed, period $p_i\geq1$, and such that $\iso(\gamma_i(t))=\gamma_i(t+1)$ for all $t\in\R$. Thus the critical orbits of the energy $E$ corresponding to $\ell_i$ are all the $\orb(\gamma_i^{mp_i+1})$, for $m\in\N$. We set
\begin{align*}
D_i=\big\{d\in\N\ \big|\ \Loc_d(E,\orb(\gamma_i^{mp_i+1}))\neq0\mbox{ for infinitely many $m\in\N$}\big\},\\ \forall i=1,...,r. 
\end{align*}
Up to renaming the $\iso$-invariant geodesics, we can assume that $D_i\neq\varnothing$ for $i=1,...,s$, whereas $D_i=\varnothing$ for all $i>s$. Notice that, by the assumptions of the theorem, $s\geq1$. 
We also set
\[
d_i=\max D_i,\s\s \forall i=1,...,s.
\]
Notice that $d_i$ is finite. Indeed, the fact that the local homology $\Loc_*(E,\orb(\gamma_i^{mp_i+1})$ is non-trivial in a fixed degree for arbitrarily large $m$ implies that $\avind(\gamma_i)=0$. Therefore, by Proposition~\ref{p:average_index}, $\ind(\gamma_i^{mp_i+1})=0$ for all $m\in\N$, and we conclude that $d_i\leq\nul(\gamma_i^{mp_i+1})+1\leq2\dim(M)$.

Let $i$ be such that $d_i=\max\{d_1,...,d_s\}$. In particular $d_i\geq d\geq2$, where $d$ is the integer in the statement of the proposition. We reset $\gamma:=\gamma_i$, $p:=p_i$ and $d:=d_i$. Notice that this new $\gamma$ still satisfies the assumptions of the proposition (with respect to the new $p$ and $d$). Fix $\overline m\in\N$ sufficiently large so that, for all $j>s$, we have the following: 
\begin{itemize}
\item if $\avind(\gamma_j)=0$ then $\Loc_*(E,\orb(\gamma_j^{mp_j+1}))$ is trivial for all integers $m\geq\overline{m}$,
\item if $\avind(\gamma_j)>0$ then $\ind(\gamma_j^{mp_j+1})>d+2$  for all integers $m\geq\overline{m}$. 
\end{itemize}
We set
\[
\overline{e}:=\max\big\{ E(\gamma_j^{\overline{m}p_j+1})\ \big|\ s<j\leq r \big\}.
\]
For all $j>s$, the inequality $E(\gamma_j^{mp_j+1})>\overline{e}$ implies $m>\overline{m}$. Choose another large enough $\overline{m}'\geq\overline{m}$ such that $E(\gamma^{mp+1})>\overline{e}$ for all integers $m\geq\overline{m}'$.

Now, fix an integer $m\geq\overline{m}'$, and set $c=c(m):=E(\gamma^{mp+1})$. Take $\epsilon=\epsilon(m)>0$ small enough so that the interval $(c,c+\epsilon]$ does not contain critical points of $E$. In particular, the inclusion induces an injective homomorphism
\begin{align}\label{e:technical_emb_loc_hom_epsilon}
 \Loc_*(E,\orb(\gamma^{mp+1}))
 \hookrightarrow
 \Hom_*(\{E<c+\epsilon\},\{E<c\}).
\end{align}
Moreover, since $E$ does not have any critical orbit with critical value larger than $\overline{e}$ and non-zero local homology in degree $d+1$, the Morse inequalities imply
\begin{align}
\label{e:technical_relative_homology_zero_epsilon}
\Hom_{d+1}\big(\{E<c+\epsilon\},\{E<c\}\big)=0,\\
\label{e:technical_relative_homology_zero}
\Hom_{d+1}\big(\pathMI,\{E<c+\epsilon\}\big)=0.
\end{align}
By \eqref{e:technical_emb_loc_hom_epsilon}, \eqref{e:technical_relative_homology_zero_epsilon}, and the long exact sequence of the triple
\[
\{E<c\}\ \subset\ \{E<c\}\cup\orb(\gamma^{mp+1})\ \subset\ \{E<c+\epsilon\}
\]
we infer that $\Hom_{d+1}(\{E<c+\epsilon\},\{E<c\}\cup\orb(\gamma^{mp+1}))$ is trivial. This, together with~\eqref{e:technical_relative_homology_zero} and the long exact sequence of the triple
\[
\{E<c\}\cup\orb(\gamma^{mp+1})\ \subset\ \{E<c+\epsilon\}\ \subset\ \pathMI,
\]
implies that $\Hom_{d+1}(\pathMI,\{E<c\}\cup\orb(\gamma^{mp+1}))$ is trivial. Finally this, together with the long exact sequence of the triple 
\[
\{E<c\}
\ \subset\
\{E<c\}\cup\orb(\gamma^{mp+1})
\ \subset\ 
\pathMI,
\]
implies that the inclusion induces an injective homomorphism
\begin{align*}
\Loc_d(E,\orb(\gamma^{mp+1}))\hookrightarrow\Hom_d(\pathMI,\{E<E(\gamma^{mp+1})\}).
\end{align*}
Since this is true for any integer $m\geq\overline{m}'$, it contradicts the assertion of Lemma~\ref{l:bangert_local_homology}.
\end{proof}

\section{Proof of Theorem~\ref{t:main}}\label{s:proof_main}

The proof of Theorem~\ref{t:main} goes along the following lines. By applying Morse theory to the energy function $E$, we find infinitely many critical points that correspond  to either infinitely many (geometrically distinct) $\iso$-invariant geodesics, or to a single $\iso$-invariant geodesic satisfying the assumptions of Proposition~\ref{p:bangert_klingenberg}. However, this latter proposition implies that there are infinitely many $\iso$-invariant geodesics. We go over this argument in the following.

Since $\rank\Hom_1(M_1)\neq0$, we can find a smooth 1-cycle $s$ such that 
\begin{align}\label{e:property_of_s}
s^m\neq0\ \mbox{ in }\ \Hom_1(M_1),\s\s \forall m\in\N.  
\end{align}
Let $\sigma:\R\to M_1$ be a 1-periodic smooth curve such that $\sigma|_{[0,1]}$ is a parametrization of $s$. For $m\in\N$, we define a smooth map $\Sigma_m:M_2\to\path{M}{\id}$ by $\Sigma_m(q)(t)=(\sigma^m(t),q)$. We denote by $C_m$ the connected component of the free loop space $\path{M}{\id}$ containing $\Sigma_m(M_2)$. By~\eqref{e:property_of_s} we have that $C_m\cap C_n=\varnothing$ if $m\neq n$.

Consider a smooth homotopy $\iso_t:M\to M$, where $t\in[0,1]$, such that $\iso_0=\id$ and $\iso_1=\iso$. This homotopy induces a continuous map $\iota:\path{M}{\id}\to\pathMI$ in the following way. For all $\zeta\in\path{M}{\id}$ and $t\in[0,1]$ we set
\begin{align*}
\iota(\zeta)(t)
=
\left\{
  \begin{array}{ll}
    \zeta(2t) & \mbox{ if }t\in[0,1/2], \\\\
    I_{2t-1}(\zeta(0)) & \mbox{ if }t\in[1/2,1], \\ 
  \end{array}
\right.
\end{align*}
and we extend $\iota(\zeta)$ to the whole real line in such a way that \[\iso(\iota(\zeta)(t))=\iota(\zeta)(t+1),\s\s\forall t\in\R.\]
It is easy to see that $\iota$ is a homotopy equivalence (see \cite[Lemma~3.6]{Grove:Condition_C_for_the_energy_integral_on_certain_path_spaces_and_applications_to_the_theory_of_geodesics}). If we denote by $D_m$ the connected component of $\pathMI$ containing $\iota(C_m)$, we have that $D_m\cap D_n=\varnothing$.

Let $\ev:D_m\to M_2$ be the evaluation map given by $\ev(\zeta)=\zeta_2(0)$ for all $\zeta\in D_m$, where $\zeta_2$ denotes the $M_2$-factor of the curve $\zeta:\R\to M_1\times M_2$. Notice that $\ev\circ\iota\circ\Sigma_m$ is the identity on $M_2$. Therefore $\ev$ is a left inverse for $\iota\circ\Sigma_m$, and this latter map  induces an injective homomorphism
\begin{align*}
(\iota\circ\Sigma_m)_*: \Hom_*(M_2)\hookrightarrow\Hom_*(D_m).
\end{align*}

We can assume that $M_2$ is an orientable manifold. If this is not true, we proceed as follows. We replace $M_2$ by its orientable 2-fold covering $\widetilde M_2$. Our manifold $M$ admits a $2$-fold covering $\widetilde M$ that is homeomorphic to the 2-fold covering $M_1\times\widetilde M_2$ of $M_1\times M_2$. It suffices to lift the Riemannian metric $g$ and the isometry $\iso$ to $\widetilde M$, and  carry over the proof of Theorem~\ref{t:main} for $(\widetilde M,\widetilde g)$. Indeed, infinitely many $\widetilde\iso$-invariant geodesics on $\widetilde M$ project down to infinitely many $\iso$-invariant geodesics on $M$.

By our orientability assumption on $M_2$, we have that $\Hom_d(M_2)$ is non-trivial for $d=\dim(M_2)$. Thus $\Hom_d(D_m)$ is non-trivial as well. Since we are looking for infinitely many $\iso$-invariant geodesics, we can assume that all the critical orbits of $E$ are isolated (otherwise we are already done). By the Morse inequalities, there exists a critical point $\gamma_m$ contained in the connected component $D_m$ and such that the local homology $\Loc_d(E,\orb(\gamma_m))$ is non-trivial.

Summing up, we have found infinitely many critical orbits $\orb(\gamma_m)$, where $m\in\N$, whose local homology is non-trivial in the fixed degree $d=\dim(M_2)\geq2$. If these critical orbits correspond to only finitely many (geometrically distinct) $\iso$-invariant geodesics, then there must be a critical point $\gamma$ of $E$ which is periodic with period $p\geq1$ and such that, for infinitely many $m\in\N$, the critical orbit $\orb(\gamma_m)$ is the critical orbit $\orb(\gamma^{\mu p+1})$ for some $\mu=\mu(m)$. In particular $\gamma$ satisfies the assumptions of Proposition~\ref{p:bangert_klingenberg}, and therefore $(M,g)$ must have  infinitely many $\iso$-invariant geodesics. \hfill\qed


\bibliography{_biblio}

\providecommand{\bysame}{\leavevmode\hbox to3em{\hrulefill}\thinspace}
\providecommand{\MR}{\relax\ifhmode\unskip\space\fi MR }
\providecommand{\MRhref}[2]{%
  \href{http://www.ams.org/mathscinet-getitem?mr=#1}{#2}
}
\providecommand{\href}[2]{#2}
\begin{thebibliography}{Maz11b}

\bibitem[Ban80]{Bangert:Closed_geodesics_on_complete_surfaces}
V.~Bangert, \emph{Closed geodesics on complete surfaces}, Math. Ann.
  \textbf{251} (1980), no.~1, 83--96.

\bibitem[Ban93]{Bangert:On_the_existence_of_closed_geodesics_on_two_spheres}
\bysame, \emph{On the existence of closed geodesics on two-spheres}, Internat.
  J. Math. \textbf{4} (1993), no.~1, 1--10.

\bibitem[BH84]{Bangert_Hingston:Closed_geodesics_on_manifolds_with_infinite_Abelian_fundamental_group}
V.~Bangert and N.~Hingston, \emph{{Closed geodesics on manifolds with infinite
  Abelian fundamental group.}}, J. Differ. Geom. \textbf{19} (1984), 277--282.

\bibitem[BK83]{Bangert_Klingenberg:Homology_generated_by_iterated_closed_geodesics}
V.~Bangert and W.~Klingenberg, \emph{Homology generated by iterated closed
  geodesics}, Topology \textbf{22} (1983), no.~4, 379--388.

\bibitem[Bot56]{Bott:On_the_iteration_of_closed_geodesics_and_the_Sturm_intersection_theory}
R.~Bott, \emph{On the iteration of closed geodesics and the {S}turm
  intersection theory}, Comm. Pure Appl. Math. \textbf{9} (1956), 171--206.

\bibitem[Fra92]{Franks:Geodesics_on_S2_and_periodic_points_of_annulus_homeomorphisms}
J.~Franks, \emph{Geodesics on {$S^2$} and periodic points of annulus
  homeomorphisms}, Invent. Math. \textbf{108} (1992), no.~2, 403--418.

\bibitem[GM69]{Gromoll_Meyer:Periodic_geodesics_on_compact_Riemannian_manifolds}
D.~Gromoll and W.~Meyer, \emph{Periodic geodesics on compact {R}iemannian
  manifolds}, J. Differential Geometry \textbf{3} (1969), 493--510.

\bibitem[Gro73]{Grove:Condition_C_for_the_energy_integral_on_certain_path_spaces_and_applications_to_the_theory_of_geodesics}
K.~Grove, \emph{Condition {$(C)$} for the energy integral on certain path
  spaces and applications to the theory of geodesics}, J. Differential Geometry
  \textbf{8} (1973), 207--223.

\bibitem[Gro74]{Grove:Isometry_invariant_geodesics}
\bysame, \emph{Isometry-invariant geodesics}, Topology \textbf{13} (1974),
  281--292.

\bibitem[GT76]{Grove_Tanaka:On_the_number_of_invariant_closed_geodesics_BULLETTIN}
K.~Grove and M.~Tanaka, \emph{On the number of invariant closed geodesics},
  Bull. Amer. Math. Soc. \textbf{82} (1976), no.~3, 497--498.

\bibitem[GT78]{Grove_Tanaka:On_the_number_of_invariant_closed_geodesics_ACTA}
\bysame, \emph{On the number of invariant closed geodesics}, Acta Math.
  \textbf{140} (1978), no.~1-2, 33--48.

\bibitem[Hin88]{Hingston:Isometry_invariant_geodesics_on_spheres}
N.~Hingston, \emph{Isometry-invariant geodesics on spheres}, Duke Math. J.
  \textbf{57} (1988), no.~3, 761--768.

\bibitem[Hin93]{Hingston:On_the_growth_of_the_number_of_closed_geodesics_on_the_two_sphere}
\bysame, \emph{On the growth of the number of closed geodesics on the
  two-sphere}, Internat. Math. Res. Notices (1993), no.~9, 253--262.

\bibitem[Lon00]{Long:Multiple_periodic_points_of_the_Poincare_map_of_Lagrangian_systems_on_tori}
Y.~Long, \emph{Multiple periodic points of the {P}oincar{\'e} map of
  {L}agrangian systems on tori}, Math. Z. \textbf{233} (2000), no.~3, 443--470.

\bibitem[Lon02]{Long:Index_theory_for_symplectic_paths_with_applications}
\bysame, \emph{Index theory for symplectic paths with applications}, Progress
  in Mathematics, vol. 207, Birkh{\"a}user Verlag, Basel, 2002.

\bibitem[Lu09]{Lu:The_Conley_conjecture_for_Hamiltonian_systems_on_the_cotangent_bundle_and_its_analogue_for_Lagrangian_systems}
G.~Lu, \emph{The {C}onley conjecture for {H}amiltonian systems on the cotangent
  bundle and its analogue for {L}agrangian systems}, J. Funct. Anal.
  \textbf{256} (2009), no.~9, 2967--3034.

\bibitem[Maz11a]{Mazzucchelli:Critical_point_theory_for_Lagrangian_systems}
M.~Mazzucchelli, \emph{{C}ritical point theory for {L}agrangian systems},
  Progress in Mathematics, vol. 293, Birkh\"auser Verlag, 2011.

\bibitem[Maz11b]{Mazzucchelli:The_Lagrangian_Conley_conjecture}
\bysame, \emph{The {L}agrangian {C}onley conjecture}, Comment. Math. Helv.
  \textbf{86} (2011), no.~1, 189--246.

\bibitem[Maz11c]{Mazzucchelli:On_the_multiplicity_of_non_iterated_periodic_billiard_trajectories}
\bysame, \emph{On the multiplicity of non-iterated periodic billiard
  trajectories}, Pacific J. Math. \textbf{252} (2011), no.~1, 181--205.

\bibitem[Rad89]{Rademacher:Metrics_with_only_finitely_many_isometry_invariant_geodesics}
H.-B. Rademacher, \emph{Metrics with only finitely many isometry invariant
  geodesics}, Math. Ann. \textbf{284} (1989), no.~3, 391--407.

\bibitem[Tan82]{Tanaka:On_the_existence_of_infinitely_many_isometry_invariant_geodesics}
M.~Tanaka, \emph{On the existence of infinitely many isometry-invariant
  geodesics}, J. Differential Geom. \textbf{17} (1982), no.~2, 171--184.

\bibitem[VPS76]{ViguePoirrier_Sullivan:The_homology_theory_of_the_closed_geodesic_problem}
M.~Vigu{{\'e}}-Poirrier and D.~Sullivan, \emph{The homology theory of the
  closed geodesic problem}, J. Differential Geometry \textbf{11} (1976), no.~4,
  633--644.

\end{thebibliography}
\bibliographystyle{amsalpha}

\vspace{0.5cm}

\end{document}